\numberwithin{equation}{section}
\newtheorem{theorem}{Theorem}[section]
\newtheorem{lemma}[theorem]{Lemma}
\newtheorem{proposition}[theorem]{Proposition}
\newtheorem{corollary}[theorem]{Corollary}
\theoremstyle{definition}
\newtheorem{definition}[theorem]{Definition}
\newtheorem{remark}[theorem]{Remark}
\numberwithin{equation}{section}
\def\Z{\mathbb{Z}}
\renewcommand{\phi}{\varphi}
\renewcommand{\epsilon}{\varepsilon}
\newcommand{\1}{{\text{\Large $\mathfrak 1$}}}
\renewcommand{\emptyset}{\varnothing}
\newcommand{\tmix}{t_{\mathrm{mix}}}
\newcommand{\thit}{t_{\mathrm{hit}}}
\newcommand{\tmax}{t_{\mathrm{H}}}
\newcommand{\tces}{t_{\mathrm{Ces}}}
\newcommand{\tin}{t_{\mathrm{I}}}
\newcommand{\tunif}{t_{\mathrm{unif}}}
\newcommand{\E}[1]{\mathbb{E}\!\left[#1\right]}
\newcommand{\estart}[2]{\mathbb{E}_{#2}\!\left[#1\right]}
\newcommand{\prstart}[2]{\mathbb{P}_{#2}\!\left(#1\right)}
\newcommand{\prcond}[3]{\mathbb{P}_{#3}\!\left(#1\;\middle\vert\;#2\right)}
\newcommand{\escond}[3]{\mathbb{E}_{#3}\!\left[#1\;\middle\vert\;#2\right]}
\newcommand{\norm}[1]{\left\| #1 \right\|}
\newcommand{\tn}{|\kern-.1em|\kern-0.1em|}
\newcommand\be{\begin{equation}}
\newcommand\ee{\end{equation}}
\begin{document}
\title{\bf Intersection and mixing times for reversible chains}

\author{
Yuval Peres\thanks{Microsoft Research, Redmond, Washington, USA; peres@microsoft.com} \and Thomas Sauerwald\thanks{University of Cambridge, Cambridge, UK; thomas.sauerwald@cl.cam.ac.uk} \and Perla Sousi\thanks{University of Cambridge, Cambridge, UK;   p.sousi@statslab.cam.ac.uk} \and Alexandre Stauffer \thanks{University of Bath, Bath, UK; a.stauffer@bath.ac.uk. Supported in
part by a Marie Curie Career Integration Grant PCIG13-GA-2013-618588 DSRELIS}
}
\date{}
\maketitle
\begin{abstract}
Suppose $X$ and $Y$ are two independent irreducible Markov chains on $n$ states. We consider the intersection time, which is the first time their trajectories intersect. We show for reversible and lazy chains that the total variation mixing time is always upper bounded by the expected intersection time taken over the worst starting states. For random walks on trees we show the two quantities are equivalent. We obtain an expression for the expected intersection time in terms of the eigenvalues for reversible and transitive chains. For such chains we also show that it is up to constants the geometric mean of $n$ and $\E{I}$, where $I$ is the number of intersections up to the uniform mixing time. Finally for random walks on regular graphs we obtain sharp inequalities that relate the expected intersection time to maximum hitting time and mixing time.
\newline
\newline
\emph{Keywords and phrases.} Intersection time, random walk, mixing time, martingale, Doob's maximal inequality.
\newline
MSC 2010 \emph{subject classifications.} Primary 60J10.
\end{abstract}

\section{Introduction}\label{sec:intro}

Intersections of Markov chains have been intensively studied, partly due to their connection with loop-erased walks and spanning trees. The 1991 book of Lawler~\cite{Lawlerinter}  focuses on intersections of random walks on lattices. In  1989, 
Fitzsimmons and Salisbury~\cite{FitzSal} developed techniques for analysing intersections of Brownian motions and L\'evy processes. In 1996, Salisbury~\cite{Salisbury} adapted those techniques in order to bound intersection probabilities for discrete time Markov chains.
 In 2003, Lyons, Pemantle and Peres~\cite{LPSinters} used Salisbury's result to extend certain intersection probability estimates from lattices to general Markov chains.

In this paper we focus on finite Markov chains and study the {\em intersection time},  defined as follows. Let $P$ denote the transition matrix of an irreducible Markov chain on a finite state space, with stationary distribution~$\pi$. Let~$X$ and~$Y$ be two independent Markov chains with transition matrix~$P$.
Define
\[
\tau_I = \inf\{t\geq 0: \{X_0,\ldots, X_t\}\cap \{Y_0,\ldots, Y_t\} \neq \emptyset\},
\]
i.e.\ $\tau_I$ is the first time the trajectories of~$X$ and~$Y$ intersect. The key quantity will be the  expectation of the random time defined above, maximized over starting states:
\[
\tin = \max_{x,y} \estart{\tau_I}{x,y}.
\]
This quantity was considered in~\cite{interndraft}, where it was estimated in many  examples, in particular random walks on   tori $\Z_\ell^d$ for $d\geq 1$.

We denote by $\tmix=\tmix(1/4)$ the total variation mixing time and by $\thit = \max_{x,y}\estart{\tau_y}{x}$ the maximum hitting time, where for all $y$
\[
\tau_y=\inf\{t\geq 0: X_t = y\}.
\]

In order to avoid periodicity and near-periodicity issues we consider the lazy version of a Markov chain, i.e.\ the chain with transition matrix $P_L=(P+I)/2$. From now on, unless otherwise stated, all chains will be assumed lazy.

For functions $f,g$ we will write $f(n) \lesssim g(n)$ if there exists a constant $c > 0$ such that $f(n) \leq c g(n)$ for all $n$.  We write $f(n) \gtrsim g(n)$ if $g(n) \lesssim f(n)$.  Finally, we write $f(n) \asymp g(n)$ if both $f(n) \lesssim g(n)$ and $f(n) \gtrsim g(n)$.

We define $$\tmax = \max_{x,A: \pi(A)\geq 1/8} \estart{\tau_A}{x},$$ where $\tau_A$ stands for the first hitting time of the set $A$.

Our first result shows that $\tin$ is an upper bound on $\tmax$ for all chains. Recall that all our chains are irreducible. 

\begin{theorem}\label{thm:upperboundmix}
For all lazy Markov chains on a finite state space we have
\[
\tmax \lesssim t_I.
\]
\end{theorem}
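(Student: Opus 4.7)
The plan is to reduce the theorem to the following claim: for any starting state $x$ and any set $A$ with $\pi(A)\ge 1/8$, the chain $X$ started at $x$ hits $A$ within time $O(\tin)$ with probability at least a universal constant $c>0$. A standard restart argument on disjoint blocks of length $O(\tin)$ then produces geometric decay $\pr{\tau_A>kT}\le(1-c)^k$, giving $\estart{\tau_A}{x}=O(\tin)$; maximizing over $x$ and $A$ yields $\tmax=O(\tin)$, as desired.

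To produce this constant-probability hit, couple $X$ (started at $x$) with an independent copy $Y$ started from the stationary distribution $\pi$, and set $T:=C\tin$ for a large absolute constant $C$. Averaging the bound $\estart{\tau_I}{x,y}\le\tin$ over $y\sim\pi$ gives $\estart{\tau_I}{x,\pi}=\E_{y\sim\pi}\estart{\tau_I}{x,y}\le\tin$, so Markov's inequality yields $\pr{\tau_I\le T}\ge 1-1/C$. Equivalently, the ranges $\{X_0,\ldots,X_T\}$ and $\{Y_0,\ldots,Y_T\}$ meet with probability close to $1$.

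The crux is to show that this meeting occurs inside $A$ with positive constant probability. The key input is stationarity of $Y$: $\pr{Y_t\in A}\ge\pi(A)\ge 1/8$ at every fixed $t$. Let $N_A:=|\{(s,t)\in[0,T]^2:X_s=Y_t\in A\}|$; any such pair witnesses $X_s\in A$, so $\{N_A\ge 1\}\subseteq\{\tau_A\le T\ \text{for}\ X\}$, and it suffices to lower bound $\pr{N_A\ge 1}$ by a universal constant. By the Paley--Zygmund inequality, $\pr{N_A\ge 1}\ge (\E{N_A})^2/\E{N_A^2}$. The first moment computes directly as $\E{N_A}=(T+1)\sum_{s=0}^T\sum_{z\in A}\pi(z)\, p_s(x,z)$, and comparison with the total expected intersection count $\E{|\{(s,t):X_s=Y_t\}|}$ shows that an $\Omega(\pi(A))=\Omega(1)$ fraction of intersections should fall inside $A$.

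The main obstacle is bounding the second moment $\E{N_A^2}$. It expands into a quadruple sum over $(s,t,s',t')\in[0,T]^4$ of joint probabilities $\pr{X_s=Y_t=z,\ X_{s'}=Y_{t'}=w}$ with $z,w\in A$, which reduces to Green-function-type expressions involving $\pi(z)\pi(w)\, p_s(x,z)\, p_{|s'-s|}(z,w)\, p_{|t'-t|}(z,w)$ and their symmetrizations in the indices. To obtain $\E{N_A^2}=O((\E{N_A})^2)$ one must control the extra correlation factors coming from pairs of intersections at nearby times; here the lazy structure of the chain is essential, both to tame short-time behavior and to let one exchange roles of the time variables cleanly. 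Once this variance bound is in place, Paley--Zygmund produces the required constant probability, and the restart argument closes the proof.
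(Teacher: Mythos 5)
Your reduction to a constant-probability hit of $A$ in time $O(\tin)$, and the observation that $\estart{\tau_I}{x,\pi}\le\tin$ follows by averaging (so one may take $Y$ stationary), are both fine and match the paper. But the core of your argument --- Paley--Zygmund applied to $N_A$, the number of intersections \emph{inside} $A$ --- has a genuine gap, and in fact this route cannot be repaired. The second-moment bound $\estart{N_A^2}{x,\pi}\lesssim(\estart{N_A}{x,\pi})^2$, which you flag as ``the main obstacle'' and leave unproven, is false in general; worse, even the event $\{N_A\ge1\}$ can have probability $o(1)$ at time $T=C\tin$ while $\prstart{\tau_A\le T}{x}\asymp 1$, so no variance control on $N_A$ can salvage the conclusion. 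Concretely, take the clique $K_k$ with a disjoint path of length $k$ attached to each clique vertex ($n\asymp k^2$), let $A$ be the union of the far halves of the paths (so $\pi(A)\asymp 1$), and start $X$ at a clique vertex. Here $\tin\asymp k^2$ and $\estart{\tau_A}{x}\asymp k^2$, but a stationary $Y$ is localized on a single random path for time $\asymp k^2$, and the chance that $X$ penetrates to depth $k/2$ on \emph{that particular} path within time $Ck^2$ is $\asymp 1/k$; thus $\prstart{N_A\ge1}{x,\pi}\asymp 1/k$ even though $X$ reaches $A$ (on \emph{some} path) with constant probability. One can also check $\estart{N_A}{x,\pi}\asymp k^2$ and $\estart{N_A^2}{x,\pi}\asymp k^5$, so Paley--Zygmund returns only $1/k$. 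The underlying issue is that requiring the two trajectories to meet inside $A$ is a much stronger event than requiring $X$ to hit $A$.

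The paper avoids this entirely: it only asks that the intersection point $z$ of $X$ with the stationary trajectory $Y[0,t]$ be a state from which $A$ is hit within time $t$ with probability at least $1/4$, and it guarantees that (with probability $\ge 1/2$) \emph{every} point on $Y[0,t]$ has this property. The tool is Doob's maximal inequality applied to the bounded martingale $M_k=\prcond{Y_t\in A^c}{Y_k}{\pi}$, whose terminal value has mean $\prstart{Y_t\in A^c}{\pi}\le 7/8$ by stationarity; hence $\prstart{\max_k M_k\ge 3/4}{\pi}\le 7/6\cdot\ldots$ is bounded away from $1$ (the paper gets $\le 1/2$ with $\pi(A)\ge 1/8$). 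On the complement, the Markov property applied at the first time $X$ meets $Y[0,t]$ finishes the proof. In your counterexample this works because clique vertices, where the intersection actually occurs, are precisely such ``good'' launching points for $A$. If you want to keep a second-moment flavour, you would have to count intersections weighted by reachability of $A$ rather than intersections lying in $A$, which is essentially what the martingale argument accomplishes without any variance estimate.
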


Using the equivalence between mixing times and $\tmax$ for reversible chains proved independently by~\cite{mixhitroberto} and~\cite{mixhit} we obtain the following corollary.

\begin{corollary}\label{cor:tmix}
For all reversible and lazy Markov chains on a finite state space we have
\[
\tmix \lesssim t_I.
\]
For weighted random walks on finite trees, we have
\[
\tmix \asymp \tin.
\]
\end{corollary}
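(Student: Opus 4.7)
The bound $\tmix \lesssim \tin$ for reversible lazy chains will follow at once from Theorem~\ref{thm:upperboundmix} together with the equivalence $\tmix \asymp \tmax$ established in~\cite{mixhitroberto, mixhit}. So only the reverse inequality $\tin \lesssim \tmix$ for weighted random walks on trees remains; the plan is to prove it by exploiting the separator property of trees.

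The approach rests on the following observation: on a tree, for any vertex $v$ on the unique path $[x,y]$ between the starting states, both trajectories contain $v$ once both walks have hit it, so $\tau_I \leq \max(\tau_v^X, \tau_v^Y)$, whence
\[
\estart{\tau_I}{x,y} \;\leq\; \estart{\tau_v}{x} + \estart{\tau_v}{y}.
\]
For each pair $(x,y)$, the goal is therefore to locate some $v^* \in [x,y]$ for which both hitting-time expectations on the right are at most $\tmax$; the conclusion $\tin \lesssim \tmix$ will then follow from $\tmax \asymp \tmix$.

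Write $A_v$ and $B_v$ for the connected components of $T\setminus\{v\}$ containing $x$ and $y$ respectively, and $U_v$ for the union of the remaining components. The key intermediate claim is that there exists $v^* \in [x,y]$ with both $\pi(A_{v^*}) \leq 7/8$ and $\pi(B_{v^*}) \leq 7/8$. To verify it, suppose for contradiction that at every $v$ on the path one of $\pi(A_v), \pi(B_v)$ exceeds $7/8$, and let $i^*$ be the smallest index along the path $x=v_0,v_1,\ldots,v_k=y$ with $\pi(A_{v_{i^*}}) > 7/8$ (such $i^*$ exists because $\pi(B_{v_k})=0$, which forces $\pi(A_{v_k})>7/8$ under the hypothesis). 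Then $\pi(A_{v_{i^*-1}}) \leq 7/8$ forces $\pi(B_{v_{i^*-1}}) > 7/8$; but $A_{v_{i^*}} = A_{v_{i^*-1}} \cup \{v_{i^*-1}\} \cup U_{v_{i^*-1}}$ is precisely the complement of $B_{v_{i^*-1}}$ in $T$, so
\[
\pi(A_{v_{i^*}}) \;=\; 1 - \pi(B_{v_{i^*-1}}) \;<\; 1/8,
\]
contradicting $\pi(A_{v_{i^*}}) > 7/8$.

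For this balanced $v^*$, the separator property of the tree ensures that $v^*$ is the only vertex of $T\setminus A_{v^*}$ adjacent to $A_{v^*}$. Hence a walk started at $x \in A_{v^*}$ must hit $v^*$ before reaching anything else outside $A_{v^*}$, i.e.\ $\tau_{v^*} = \tau_{T\setminus A_{v^*}}$; since $\pi(T\setminus A_{v^*}) = 1 - \pi(A_{v^*}) \geq 1/8$, this yields $\estart{\tau_{v^*}}{x} \leq \tmax$, and symmetrically $\estart{\tau_{v^*}}{y} \leq \tmax$. Combining these bounds gives $\estart{\tau_I}{x,y} \leq 2\tmax \asymp \tmix$, and taking the supremum over $(x,y)$ yields $\tin \lesssim \tmix$. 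The only substantive step is producing the balanced $v^*$, handled by the elementary counting argument above; the hitting-time control then falls out immediately from the separator structure of trees.
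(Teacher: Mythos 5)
Your proof is correct. The first half ($\tmix \lesssim \tin$) is exactly the paper's argument: Theorem~\ref{thm:upperboundmix} combined with the equivalence $\tmix \asymp \tmax$ from Theorem~\ref{thm:mixhit}. For the reverse inequality on trees, your overall strategy coincides with the paper's --- bound $\tau_I$ by the time for both walks to hit a single separating vertex --- but you justify the hitting-time bound differently. The paper picks one global \emph{central node} $v$ (each component of $T-\{v\}$ has stationary mass at most $1/2$) and then cites the tree-specific Theorem~\ref{thm:treetmix}, which asserts $\tmix \asymp \max_x \estart{\tau_v}{x}$. You instead construct, for each pair $(x,y)$, a balanced vertex $v^*$ on the path $[x,y]$ with $\pi(A_{v^*}), \pi(B_{v^*}) \leq 7/8$ via a clean discrete intermediate-value argument (the identity $\pi(A_{v_{i}}) = 1 - \pi(B_{v_{i-1}})$ for consecutive path vertices is correct, since these are the two components obtained by deleting the edge $(v_{i-1},v_i)$), and then use the tree separator property to identify $\tau_{v^*}$ with $\tau_{T\setminus A_{v^*}}$, a hitting time of a set of mass at least $1/8$, which is at most $\tmax \asymp \tmix$ by Theorem~\ref{thm:mixhit} alone. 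This buys you a self-contained proof that does not invoke Theorem~\ref{thm:treetmix}; in effect you reprove the direction of that theorem needed here (the same separator argument applied to the central node, whose components have mass at most $1/2 \geq 1/8$ in the complement, would work too). The paper's route is shorter given its citations; yours is more elementary and makes the mechanism explicit. The degenerate cases $v^* \in \{x,y\}$ are harmless since the relevant hitting time is then zero, so there is no gap.
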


We prove Theorem~\ref{thm:upperboundmix} and Corollary~\ref{cor:tmix}
in Section~\ref{sec:reversible}, where we also state the equivalence between mixing and hitting times.

\begin{remark}\rm{
We recall the definition of the Cesaro mixing time
\[
\tces=\min\left\{t: \, \max_x\norm{\frac{1}{t}\sum_{i=0}^{t-1}p_s(x,\cdot)  -\pi  }_{{\rm TV}}\leq \frac{1}{4} \right\}.
\]
Since $\tces\asymp \tmax$ for all lazy and irreducible chains without assuming reversibility (see for instance~\cite[Theorem~6.1 and~Proposition~7.1]{mixhit}), it follows from Theorem~\ref{thm:upperboundmix} that $\tces \lesssim \tin$.
}
\end{remark}

\begin{remark}\rm{
We note that $\tin\leq 2\thit$, since we can fix a state and wait until both chains hit it.  So Theorem~\ref{thm:upperboundmix} demonstrates that the intersection time can be sandwiched between the mixing time and the maximum hitting time of the chain. Hence this double inequality can be viewed as a refinement of the basic inequality stating that the mixing time is upper bounded by the maximum hitting time, which is rather loose for many chains.}
\end{remark}

We denote by $\tunif$ the uniform mixing time, i.e.
\[
\tunif = \inf\left\{ t\geq 0: \max_{x,y}\left|\frac{p_t(x,y)}{\pi(y)} -1 \right|\leq \frac{1}{4} \right\},
\]
where $p_t(x,y)$ stands for the transition probability from~$x$ to~$y$ in~$t$ steps for a lazy chain.
Benjamini and Morris~\cite{BenjaminiMorris} related $\tunif$ to intersection properties of multiple random walks.

A chain is called transitive if for any two points $x, y$ in the state space $E$, there is a bijection $\phi:E\to E$ such that $\phi(x)=y$ and $p(z,w) = p(\phi(z),\phi(w))$ for all $z,w$.

For transitive reversible chains, we obtain an expression for the intersection time as stated in the following theorem. We prove it in Section~\ref{sec:transitive}.

\begin{theorem}\label{thm:transitive}
Let $X$ be a transitive, reversible and lazy chain on $n$ states and $Q=\sum_{j=2}^{n} (1-\lambda_j)^{-2}$, where $(\lambda_j)_j$ are the non-unit eigenvalues of the chain in decreasing order. Then we have
\[
\tin \asymp \sqrt{Q}\quad \text{and} \quad Q\asymp n\sum_{i,j=0}^{\tunif} p_{i+j}(x,x)
\]
for any state $x$.
\end{theorem}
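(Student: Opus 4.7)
The plan is to handle the two asymptotic equivalences in turn, beginning with the identity $Q \asymp n \sum_{i,j=0}^{\tunif} p_{i+j}(x,x)$, since the structural insight it provides drives the proof of $\tin \asymp \sqrt{Q}$. By transitivity and reversibility $\pi$ is uniform and $p_t(x,x) = \frac{1}{n}\sum_{k}\lambda_k^t$, so expanding the double sum gives
\[
n \sum_{i,j=0}^{\tunif} p_{i+j}(x,x) \;=\; (\tunif+1)^2 \;+\; \sum_{k\ge 2}\Bigl(\tfrac{1-\lambda_k^{\tunif+1}}{1-\lambda_k}\Bigr)^2.
\]
Since $\tunif \ge t_{\mathrm{rel}} = (1-\lambda_2)^{-1} \ge (1-\lambda_k)^{-1}$ for every $k\ge 2$, the numerator $1-\lambda_k^{\tunif+1}$ is bounded between two positive constants and the second term is $\asymp Q$. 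To rule out $(\tunif+1)^2$ dominating, I use the identity $Q = \sum_{s,r \ge 0}\sum_{k\ge 2}\lambda_k^{s+r} = \sum_{u\ge 0}(u+1)f(u)$ with $f(u) := n\,p_u(x,x) - 1$. The $L^\infty$-mixing characterisation for transitive reversible chains identifies $f(t)$ (up to parity and a constant) with $\|p_t(x,\cdot)/\pi - 1\|_\infty$, so $f(u)\gtrsim 1$ for all $u\le c\,\tunif$, whence $Q \ge \sum_{u\le c\tunif}(u+1) \gtrsim \tunif^2$. This settles the second equivalence and, crucially, yields $\tunif \lesssim \sqrt{Q}$ for use in the first.

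For the upper bound $\tin \lesssim \sqrt{Q}$, the plan is the second moment method applied to $N_T := |\{(i,j) \in [0,T]^2 : X_i = Y_j\}|$ launched from stationarity. Transitivity, reversibility and the Markov property give $\estart{N_T}{\pi,\pi} = (T+1)^2/n$ and $\estart{N_T^2}{\pi,\pi} \lesssim (T+1)^2(T^2+Q)/n^2$, so Paley--Zygmund at $T = C\sqrt{Q}$ for $C$ large gives $\prstart{\tau_I \le T}{\pi,\pi} \ge 1 - \epsilon$ for any preset $\epsilon > 0$. To pass to an arbitrary start $(x,y)$ I use the $L^\infty$ bound $p_\tunif(x,z) \le (5/4)\pi(z)$; applied to both chains this yields $\prstart{\tau_I > \tunif + T}{x,y} \le (5/4)^2\,\prstart{\tau_I > T}{\pi,\pi} \le 2\epsilon$. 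Taking $\epsilon < 1/4$, the survival probability decays geometrically over blocks of length $\tunif + T \lesssim \sqrt{Q}$ (using $\tunif \lesssim \sqrt{Q}$ from the previous paragraph), and summing yields $\tin \lesssim \sqrt{Q}$.

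The matching lower bound $\tin \gtrsim \sqrt{Q}$ is the technical heart of the argument and where I expect the main difficulty: the naive Markov bound $\prstart{\tau_I \le T}{\pi,\pi} \le \estart{N_T}{\pi,\pi} = T^2/n$ only gives $\tin \gtrsim \sqrt{n}$, which is too weak once $Q \gg n$ (e.g.\ the four-dimensional torus). The idea is instead to exploit the clustering of intersections: on the event $\{\tau_I \le T/2\}$, let $(\iota,\gamma)$ be the first intersection pair (with $X_\iota = Y_\gamma =: z$ and $\max(\iota,\gamma) = \tau_I$); by the strong Markov property applied separately to the two chains, the expected number of additional intersections in $(\iota,T] \times (\gamma,T]$ conditional on $\F_{\tau_I}$ is at least $\sum_{s,s'=1}^{T/2} p_{s+s'}(z,z)$, which by the second equivalence and transitivity is $\asymp Q/n$ as soon as $T/2 \gtrsim \tunif$. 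Consequently $(Q/n)\,\prstart{\tau_I \le T/2}{\pi,\pi} \lesssim \estart{N_T}{\pi,\pi} = T^2/n$, so $\prstart{\tau_I \le T/2}{\pi,\pi} \lesssim T^2/Q$; for $T = c\sqrt{Q}$ with $c$ small this is at most $1/2$, yielding $\tin \ge \estart{\tau_I}{\pi,\pi} \gtrsim \sqrt{Q}$. The main obstacle is making this cluster step rigorous---defining $(\iota,\gamma)$ so that $\max(\iota,\gamma)$ is a stopping time for the joint filtration, and establishing the uniform-in-$z$ lower bound on the return-time sum, where transitivity is essential---while the spectral identities, the second moment computation, and the $L^\infty$ transfer are routine.
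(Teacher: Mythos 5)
Your treatment of the second equivalence and of the bound $\tunif\lesssim\sqrt{Q}$ is correct and essentially matches the paper (the spectral expansion of $Q_t$ together with Lemma~\ref{lem:tunifbound}). The upper bound $\tin\lesssim\sqrt{Q}$ has a fixable numerical flaw: Paley--Zygmund cannot deliver $\prstart{\tau_I\le T}{\pi,\pi}\ge 1-\epsilon$. With $\estart{N_T}{\pi,\pi}=(T+1)^2/n$ and $\estart{N_T^2}{\pi,\pi}\le 4(T+1)^2\bigl((T+1)^2+Q\bigr)/n^2$, the method yields at most $\tfrac{(T+1)^2}{4((T+1)^2+Q)}\le\tfrac14$, so your transfer inequality $\prstart{\tau_I>\tunif+T}{x,y}\le(5/4)^2\,\prstart{\tau_I>T}{\pi,\pi}$ has right-hand side larger than $1$ and closes nothing. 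The repair is to transfer the \emph{success} probability instead, using the lower bound $p_{\tunif}(x,\cdot)\ge\tfrac34\pi(\cdot)$: this gives $\prstart{\tau_I\le \tunif+T}{x,y}\ge(3/4)^2\,\prstart{\tau_I\le T}{\pi,\pi}\ge c>0$ uniformly in $x,y$, and then one iterates over blocks; this is exactly what the paper does after its equation~\eqref{eq:1/8}.

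The serious gap is in the lower bound $\tin\gtrsim\sqrt{Q}$, precisely at the step you flag as the main obstacle. The claim that, conditionally on $\F_{\tau_I}$, the expected number of further intersections in $(\iota,T]\times(\gamma,T]$ is at least $\sum_{s,s'}p_{s+s'}(z,z)$ does not follow from the strong Markov property ``applied separately to the two chains.'' Say $\iota\le\gamma=\tau_I$. The post-$\gamma$ path of $Y$ is indeed a fresh chain from $z$, but the segment $X_{\iota},\dots,X_{\gamma}$ belongs to the conditioned past, and the first-intersection event itself biases the law of $X$ near time $\iota$ (it forces, for instance, $X[0,\iota)$ to avoid $Y[0,\gamma]$). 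So the pair $(X_{\iota+s},Y_{\gamma+s'})$ does not have the product law $p_s(z,\cdot)\otimes p_{s'}(z,\cdot)$ under this conditioning, and there is no a priori lower bound on the conditional intersection count at the first intersection point. This is exactly the difficulty that the paper's Proposition~\ref{pro:interstrans} (following H\"aggstr\"om--Peres--Steif) is engineered to avoid: one defines the set $\Gamma_t(x)$ of ``good'' indices $r$ with $\sum_j g_t(x_r,x_{r+j})\ge Q_t/2$ as a deterministic functional of the \emph{entire} path of $X$, shows via Lemmas~\ref{lem:samestartingpoint} and~\ref{lem:prelim} that each fixed index is good with probability at least $1/16$, uses that the first index at which $X$ meets $Y[0,t]$ is, conditionally on $Y$, a stopping time for $X$ (so its goodness is unbiased), and then applies the Markov property only to $Y$ at the first time it hits a good point of the fully revealed path of $X$. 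Without this device (or an equivalent one), your inequality $(Q/n)\,\prstart{\tau_I\le T/2}{\pi,\pi}\lesssim\estart{N_T}{\pi,\pi}$ is unsupported, and the lower bound --- which, as you note, is the heart of the theorem --- is not established.
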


\begin{remark}\rm{
Let $X$ and $Y$ be independent transitive, reversible and lazy chains starting from $x$.
We note that if $I=\sum_{i=0}^{\tunif} \sum_{j=0}^{\tunif} \1(X_i=Y_j)$, then $\E{I} =\sum_{i,j=0}^{\tunif}p_{i+j}(x,x)$. So Theorem~\ref{thm:transitive} can be restated by saying
\[
\tin\asymp \sqrt{n\cdot \E{I}}.
\]
}
\end{remark}

\begin{remark}\rm{
For a lazy simple random walk on $\Z_\ell^d$, the local central limit theorem
implies that  $p_t(x,x) \asymp t^{-d/2} $ for each fixed $d$ when  $t \le\tunif \asymp \ell^2$. Thus  the above theorem gives  the intersection time in $\Z_\ell^d$, for any $d \ge 1$.  In particular,
$t_I \asymp \ell^2$ for $d=1,2,3$, while $t_I\asymp \sqrt{n} \log n$ for $d=4$ and $t_I\asymp \sqrt{n}$ for $d \ge 5$, where $n=\ell^d$. These estimates were  derived  in~\cite{interndraft} by a less systematic method.
}
\end{remark}


Throughout this article, unless mentioned otherwise,
whenever we consider a finite graph, we will always perform a lazy simple random walk on it.

Finally for all regular graphs, we show the following proposition in Section~\ref{sec:regular}.

\begin{proposition}\label{pro:regulargraphs}
Let $G$ be a connected regular graph on $n$ vertices. Then

{\rm(a)} $\thit \lesssim t_I^2$

{\rm(b)} $t_I\lesssim \sqrt{n} \left(\tunif\right)^{\frac{3}{4}}$.
\end{proposition}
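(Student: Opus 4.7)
The approach is to lift the intersection-time information to a hitting-time bound via a Paley--Zygmund estimate on the local time $L_T(y) = \sum_{s=0}^T \1(X_s=y)$. A first-hit decomposition yields $\E{L_T(y)^2 \mid X_0=x} \leq 2\,\E{L_T(y)\mid X_0=x}\cdot \E{L_T(y) \mid X_0=y}$, so
\[
\prstart{\tau_y \leq T}{x} \;\geq\; \frac{\sum_{k\leq T} p_k(x,y)}{2\sum_{k\leq T} p_k(y,y)}.
\]
The numerator is controlled by the intersection argument: for independent walks from $(x,y)$, reversibility and regularity give $\prstart{X_s=Y_t}{x,y} = p_{s+t}(x,y)$, so the expected number of intersections by time $2\tin$ equals $\sum_{s,t\leq 2\tin}p_{s+t}(x,y) \leq (2\tin+1)\sum_{k\leq 4\tin} p_k(x,y)$. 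Since Markov forces this expectation to be at least $1/2$, we obtain $\sum_{k\leq 4\tin} p_k(x,y) \gtrsim 1/\tin$ for every pair. The denominator is bounded using $p_k(y,y)\geq 1/n$ together with the Green identity $\sum_k(p_k(y,y) - 1/n) = \pi(y)\,\estart{\tau_y}{\pi} \leq \thit/n$, giving $\sum_{k\leq 4\tin} p_k(y,y) \lesssim (\tin+\thit)/n$. Iterating over strong-Markov blocks of length $4\tin$ then yields $\estart{\tau_y}{x} \lesssim \tin^2(\tin+\thit)/n$; maximizing over $(x,y)$ produces $\thit(n - c\,\tin^2) \lesssim \tin^3$. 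When $\tin^2 \lesssim n$ this immediately gives $\thit \lesssim \tin^3/n \lesssim \tin^2$. The complementary regime $\tin^2 \gtrsim n$ is the main obstacle; it will require combining Corollary~\ref{cor:tmix} ($\tmix\lesssim\tin$) with the classical bound $\thit \lesssim n\tmix$ and additional bookkeeping, and is where most of the technical work sits.

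\textbf{Plan for part (b).} The approach is the second moment method applied to $I_L = \sum_{i,j=0}^L \1(X_i=Y_j)$ for two independent walks started from $\pi\times\pi$. Stationarity gives $\E{I_L} = (L+1)^2/n$, while reversibility and regularity give
\[
\E{I_L^2} \;=\; \frac{1}{n^2}\sum_{i,i',j,j'=0}^L \tr\!\bigl(P^{|i-i'|+|j-j'|}\bigr).
\]
Grouping by $m=|i-i'|+|j-j'|$ and using the identity $\sum_{m\geq 0}(m+1)(\tr(P^m) - 1) = Q$ with $Q = \sum_{j\geq 2}(1-\lambda_j)^{-2}$, one obtains $\E{I_L^2} \lesssim L^2(L^2+Q)/n^2$, so Paley--Zygmund gives $\pr{I_L>0} \gtrsim L^2/(L^2+Q)$, bounded below once $L \gtrsim \sqrt{Q}$. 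A standard total-variation coupling transfers the $\pi\times\pi$-started bound to arbitrary starting states at the cost of an additive $\tunif$ and a constant factor, so it suffices to prove $\sqrt{Q} \lesssim \sqrt{n}\,\tunif^{3/4}$.

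The remaining principal task is the spectral estimate $Q \lesssim n\tunif^{3/2}$. Summing the uniform-mixing bound $p_{\tunif}(x,x) \leq 5/(4n)$ over $x$ gives $\tr(P^{\tunif}) \leq 5/4$, forcing each $\lambda_j^{\tunif} \leq 1/4$ (for $j\geq 2$), and hence $1/(1-\lambda_j) \leq C\tunif$; thus $Q \leq C\tunif\cdot K$ where $K = \sum_{j\geq 2}(1-\lambda_j)^{-1}$. Writing $K = \sum_{m\geq 0}(\tr(P^m)-1)$ and splitting at $m = \tunif$, the tail is controlled by the sub-multiplicative bound $\tr(P^{k\tunif}) - 1 \leq (1/4)^k$, while the head is bounded by the Cauchy--Schwarz estimate $(\tr(P^m)-1)^2 \leq (n-1)(\tr(P^{2m})-1)$ summed against suitable weights; the outcome is $K \lesssim n\sqrt{\tunif}$, hence $Q \lesssim n\tunif^{3/2}$. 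This spectral estimate, exploiting both uniform mixing and the non-negativity of the lazy chain's eigenvalues, is the heart of the argument.
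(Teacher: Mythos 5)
Both halves of your plan contain a step that fails. For part (a), your regime $\tin^2 \lesssim n$ works, but the complementary regime is not a matter of ``bookkeeping'': the fallback you propose, $\thit \lesssim n\tmix \lesssim n\tin$, only gives $\thit \lesssim n\tin$, which is weaker than the target $\tin^2$ whenever $\tin \ll n$ --- and that regime with $\tin^2\gtrsim n$ is nonempty (e.g.\ expanders, or $\Z_\ell^d$ for $d\ge 3$, where $\tin\asymp\max(\sqrt n,\ell^2)\ll n$). Moreover your self-referential inequality $\thit\lesssim \tin^2(\tin+\thit)/n$ is vacuous there, and sharpening the denominator via $p_k(y,y)\lesssim k^{-1/2}$ only yields $\thit\lesssim \tin^{5/2}$. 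The paper's proof of (a) avoids Green-function ratios entirely: with $t=\sqrt{\thit}/2$, pick $y$ attaining $\thit$ and $z$ with $\prstart{\tau_y\le t}{z}\le 2t/\thit$; a union bound over the first $t/2$ positions of the walk started at $y$, together with reversibility and regularity to convert ``the walk from $z$ hits the moving point $Y_k$'' into ``a single walk from $z$ hits the fixed vertex $y$ by time $t$'', gives $\prstart{\tau_I<t/2}{y,z}\le 1/4$, hence $\tin\gtrsim\sqrt{\thit}$ unconditionally. You should adopt an argument of this type rather than patch the two-regime scheme.

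For part (b), your reduction is fine and is in fact a nice variant of the paper's Section 3 machinery without transitivity: the identity $\estart{I_L^2}{\pi,\pi}=n^{-2}\sum_{i,i',j,j'}\tr(P^{|i-i'|+|j-j'|})$ is correct, it gives $\prstart{I_L>0}{\pi,\pi}\gtrsim L^2/(L^2+Q)$, and the uniform-mixing transfer to worst-case starts works. The gap is exactly where you locate ``the heart of the argument'': the estimate $K=\sum_{j\ge2}(1-\lambda_j)^{-1}\lesssim n\sqrt{\tunif}$ cannot be extracted from $\tr(P^{\tunif})\le 5/4$ by Cauchy--Schwarz on traces. Your inequality $a_m\le\sqrt{(n-1)a_{2m}}$ (with $a_m=\tr(P^m)-1$) combined with any weighting gives at best $K\lesssim n\tunif$, hence only $\tin\lesssim\sqrt n\,\tunif$; and no purely spectral argument can do better, since the profile with all $n-1$ non-unit eigenvalues equal to $(4(n-1))^{-1/\tunif}$ satisfies $a_{\tunif}\le 1/4$ yet has $K\asymp n\tunif/\log n\gg n\sqrt{\tunif}$ once $\tunif\gg\log^2 n$. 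The missing ingredient is a genuinely graph-theoretic input, namely the return-probability decay for lazy SRW on regular graphs, $p_m(x,x)\lesssim m^{-1/2}$ for $m\le n^2$ (the paper's Lemma~\ref{lem:returnpro}, from Aldous--Fill); summing it gives $\tr(P^m)\lesssim n/\sqrt m$ and hence $\sum_{m\le\tunif}(\tr(P^m)-1)\lesssim n\sqrt{\tunif}$, after which your chain $Q\lesssim\tunif K\lesssim n\tunif^{3/2}$ and the rest of your plan go through. With that lemma inserted, your route is a legitimate and arguably cleaner alternative to the paper's direct second-moment computation (its sums $\Sigma_1,\Sigma_2$ with worst-case starting points), which uses the same Lemma~\ref{lem:returnpro} but never introduces $Q$.
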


\begin{remark}\rm{
We note that both bounds are sharp in the sense that there exist regular graphs attaining them. In particular, consider a random walk on a complete graph on $n$~vertices. Then $\tin\asymp \sqrt{n}$ and $\thit=n-1$. For a simple random walk on the cycle~$\Z_n$ we have $\thit\asymp n^2$ and~$\tunif\asymp n^2$.
}
\end{remark}


The intersection time is related
to basic sampling questions~\cite{LSC14}, testing statistical properties
of distributions \cite{BatuFRSW13} and testing structural properties of
graphs, in particular expansion and conductance \cite{CzumajS10, KaleS11,
KalebS11}. Many of the approaches used in these works rely on
collision or intersections of random walks (or more generally, random
experiments), which is quite natural if one is interested in the
algorithms which work even in sublinear time (or space).  In this
context, it is particularly important to understand the relation between
these parameters and the expansion of the underlying graph, as done in our result which relates the mixing time to the
intersection time.


We further point out that there exists a seemingly related notion for single random walks, called self-intersection time. This time plays a key role in the context of finding the discrete logarithm using Markov chains \cite{discretelog}. However, we are not aware of any direct connection between this parameter and the intersection time of two random walks, as the self-intersection time will be just a constant for many natural classes of graphs.

\section{Intersection time for reversible Markov chains}\label{sec:reversible}

In this section we give the proof of Theorem~\ref{thm:upperboundmix}. We start by stating a result proved independently by Oliveira~\cite{mixhitroberto}, and Peres and Sousi~\cite{mixhit} that relates the total variation mixing time to the maximum hitting time of large sets for lazy reversible Markov chains.

\begin{theorem}[\cite{mixhitroberto}, \cite{mixhit}]\label{thm:mixhit}
Let $X$ be a lazy reversible Markov chain with stationary distribution $\pi$. Then we have
\[
\tmix \asymp \max_{x,A: \pi(A)\geq \frac{1}{8}} \estart{\tau_A}{x},
\]
where $\tau_A$ is the first hitting time of the set~$A$, i.e.\ $\tau_A=\inf\{t\geq 0: X_t\in A\}$.
\end{theorem}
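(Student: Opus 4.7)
The theorem asserts $\tmix\asymp\tmax$ for lazy reversible chains, with $\tmax=\max_{x,A:\pi(A)\ge 1/8}\estart{\tau_A}{x}$. I would prove the two inequalities separately.

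The direction $\tmax\lesssim\tmix$ holds even without reversibility. Writing $d(t):=\max_x\norm{p_t(x,\cdot)-\pi}_{\rm TV}$, submultiplicativity gives $d(k\tmix)\le 2^{-k}$ for every positive integer $k$. Taking $k=4$ yields $d(4\tmix)\le 1/16$, so for any starting state $x$ and any $A$ with $\pi(A)\ge 1/8$,
\[
\prstart{X_{4\tmix}\in A}{x}\ge \pi(A)-\frac{1}{16}\ge \frac{1}{16}.
\]
Applying the Markov property at multiples of $4\tmix$ yields the geometric tail $\prstart{\tau_A>4k\tmix}{x}\le(15/16)^k$, so $\estart{\tau_A}{x}\lesssim\tmix$ and hence $\tmax\lesssim\tmix$.

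The direction $\tmix\lesssim\tmax$ is the substance of \cite{mixhitroberto,mixhit} and relies on reversibility. My plan is to construct a strong stationary time $T$---a randomized stopping time with $X_T\sim\pi$ and $T$ independent of $X_T$---such that $\estart{T}{x}\lesssim\tmax$ for every $x$. Such a time forces $\tmix\lesssim\max_x\estart{T}{x}\lesssim\tmax$ via the standard bound $\norm{p_t(x,\cdot)-\pi}_{\rm TV}\le \prstart{T>t}{x}$ and Markov's inequality. To build $T$, I would use a filling scheme: step by step, remove from $\pi$ the maximal mass compatible with the current one-step distribution of $X$, and declare stationarity with the corresponding probability. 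The expected completion time of this scheme is controlled by how fast the density $p_t(x,y)/\pi(y)$ becomes uniformly bounded below by a positive constant.

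The main obstacle is therefore a uniform density lower bound, and this is exactly where reversibility enters. The reduction uses the identity $p_{2t}(x,x)/\pi(x)=\sum_y p_t(x,y)^2/\pi(y)$---which holds only for reversible chains---to transform an $L^\infty$ density bound into an $L^2$ estimate, and then the $L^2$ estimate is controlled by hitting-time information. The relevant hitting-time bounds are obtained by applying Doob's maximal inequality to the supermartingale $g(X_s)+s\wedge\tau_A$ with $g(\cdot)=\estart{\tau_A}{\cdot}$, yielding uniform high-probability bounds on $\tau_A$ for every $A$ with $\pi(A)\ge 1/8$. Bookkeeping these constants uniformly in $x$ and $A$ is the technical heart of both proofs.
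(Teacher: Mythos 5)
First, note that the paper does not prove this statement at all: Theorem~\ref{thm:mixhit} is imported verbatim from \cite{mixhitroberto} and \cite{mixhit}, so your proposal has to be measured against those proofs rather than anything in this text. Your easy direction $\tmax\lesssim\tmix$ is correct and standard (submultiplicativity of $\bar d$ gives $d(4\tmix)\le 1/16$, hence $\prstart{X_{4\tmix}\in A}{x}\ge \pi(A)-1/16\ge 1/16$ and a geometric trials argument finishes). The problem is the hard direction, where your sketch has two genuine gaps. The first is the conflation of strong stationary times with stationary stopping times. The filling scheme produces a mean-optimal stopping time $T$ with $X_T\sim\pi$, but \emph{not} one with $T$ independent of $X_T$; the bound $\norm{p_t(x,\cdot)-\pi}_{\rm TV}\le\prstart{T>t}{x}$ is valid only for strong stationary times and fails badly for merely stationary ones (deterministic rotation on $\Z_n$ with $T$ uniform on $\{0,\dots,n-1\}$ has $\estart{T}{x}<n$ but never mixes). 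To pass from ``there is a stationary stopping time with $\max_x\estart{T}{x}\lesssim\tmax$'' to ``$\tmix\lesssim\tmax$'' you need the theorem that $\tmix\asymp\max_x\min_T\estart{T}{x}$ for lazy reversible chains -- which is itself one of the main results of \cite{mixhit} and is exactly as hard as the statement you are trying to prove. As written, your argument either assumes this or silently upgrades $T$ to a strong stationary time without justification.

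The second gap is the step ``the $L^2$ estimate is controlled by hitting-time information,'' which is where the entire content of the theorem lives and which you only assert. The natural way to implement your plan -- monotonicity of $p_{2t}(x,x)/\pi(x)$ for lazy reversible chains together with $\sum_{t\ge0}\bigl(p_{2t}(x,x)/\pi(x)-1\bigr)\lesssim\estart{\tau_x}{\pi}$ -- bounds the $\chi^2$ distance at time $t$ by $\estart{\tau_x}{\pi}/t$, i.e.\ by the hitting time of a \emph{single state}. That quantity can exceed $\tmax$ by an unbounded factor (on an expander, $\estart{\tau_x}{\pi}\asymp n$ while $\tmax\asymp\log n$), so this route proves only the much weaker bound $\tmix\lesssim\max_x\estart{\tau_x}{\pi}$. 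Nothing in your sketch explains how hitting times of sets of stationary measure at least $1/8$ control return probabilities to points, and the supermartingale $g(X_s)+s\wedge\tau_A$ does not bridge this. The actual proofs take different routes: \cite{mixhit} goes through the geometric mixing time and stationary stopping times, with the key step being a Doob maximal inequality applied to the martingale $M_k=\prcond{Y_t\in A^c}{Y_k}{\pi}$ -- essentially the argument reproduced in Proposition~\ref{pro:tmixtistar} of this paper -- while \cite{mixhitroberto} argues via quasi-stationary distributions. I would suggest either citing the result, as the authors do, or rebuilding the hard direction around one of those two genuine mechanisms.
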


For random walks on trees mixing times are equivalent to hitting times of the so-called ``central nodes''.
\begin{definition}\rm{
A node~$v$ of a tree~$T$ is called {\it{central}} if each component of $T-\{v\}$ has stationary probability at most $1/2$.
}
\end{definition}

\begin{theorem}[\cite{mixhit}]\label{thm:treetmix}
Let $X$ be a lazy weighted random walk on a tree $T$ and let~$v$ be a central node {\rm{(}}which always exists{\rm)}. Then
\[
\tmix \asymp \max_x\estart{\tau_v}{x},
\]
where~$\tau_v$ is the first hitting time of $v$.
\end{theorem}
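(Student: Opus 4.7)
Set $H := \max_x \estart{\tau_v}{x}$ and prove the two inequalities in $\tmix \asymp H$ separately.

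\emph{Lower bound $\tmix \gtrsim H$.} Pick $x^*$ attaining $\estart{\tau_v}{x^*} = H$, and let $C$ be the component of $T \setminus \{v\}$ containing $x^*$. Centrality gives $\pi(C) \le 1/2$, so $\pi(T \setminus C) \ge 1/2$. Since the walk started at $x^*$ is confined to $C$ until time $\tau_v$, Markov's inequality yields $\prstart{X_t \notin C}{x^*} \le \prstart{\tau_v \le t}{x^*} \le t/H$. Therefore
\[
\norm{p_t(x^*,\cdot) - \pi}_{\mathrm{TV}} \ge \pi(T \setminus C) - \prstart{X_t \in T \setminus C}{x^*} \ge \tfrac12 - \tfrac{t}{H},
\]
which is at least $1/4$ whenever $t \le H/4$. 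Hence $\tmix \ge H/4$.

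\emph{Upper bound $\tmix \lesssim H$.} By Theorem~\ref{thm:mixhit} it suffices to show $\estart{\tau_A}{x} \lesssim H$ for every $A$ with $\pi(A) \ge 1/8$ and every $x$. The strong Markov property at $\tau_v$ gives
\[
\estart{\tau_A}{x} \le \estart{\tau_v}{x} + \estart{\tau_A}{v} \le H + \estart{\tau_A}{v},
\]
so the remaining task is to prove $\estart{\tau_A}{v} \lesssim H$.

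\emph{Main obstacle.} The hard step is bounding $\estart{\tau_A}{v}$. My plan is to decompose the walk from $v$ into iid excursions (each from $v$ back to $v$) of mean length $\estart{\tau_v^+}{v} = 1/\pi(v) \le 1+H$, and set $p := \prstart{\tau_A < \tau_v^+}{v}$. Since the index of the first excursion that hits $A$ is geometric with mean $1/p$, Wald's identity yields $\estart{\tau_A}{v} \le (1+H)/p$. Using the electrical identity $p = 1/(c_v\,\Reff(v,A))$ together with $c_v = 2m\pi(v)$, this reduces the problem to showing $\Reff(v,A) \lesssim H/m$, where $m$ denotes the total edge weight. To establish this last inequality I would exploit the tree structure: the centrality condition $\pi(T_i) \le 1/2$ for each component of $T \setminus \{v\}$, combined with $\pi(A) \ge 1/8$, should force $A$ either to contain a vertex at weighted graph distance $O(H/m)$ from $v$, or to be spread sufficiently across the subtrees so that the parallel combination of the corresponding resistances delivers the same bound. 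The most delicate case is when $A$ is concentrated deep inside a single subtree $T_{i_0}$, which I would handle inductively by invoking a central node of $T_{i_0}$ and using that within $T_{i_0}$ the hitting-time maximum is also controlled by $H$.
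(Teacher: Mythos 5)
First, note that the paper does not prove this statement at all: Theorem~\ref{thm:treetmix} is imported from~\cite{mixhit} as a black box, so there is no internal proof to compare against and your argument has to stand on its own.

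On the lower bound: the inequality $\prstart{\tau_v\le t}{x^*}\le t/H$ is \emph{not} Markov's inequality --- Markov bounds the upper tail $\prstart{\tau_v\ge s}{x^*}\le H/s$, whereas you need a lower-tail bound, and for a generic starting point such a bound is false (take $x^*$ adjacent to $v$ with a deep trap hanging off it). The inequality you want does hold, but only because $x^*$ attains the maximum: writing $H=\estart{\tau_v}{x^*}\le t+\prstart{\tau_v>t}{x^*}\max_y\estart{\tau_v}{y}=t+(1-\prstart{\tau_v\le t}{x^*})H$ via the Markov property at time $t$ gives exactly $\prstart{\tau_v\le t}{x^*}H\le t$. (Even more simply, you could skip the TV computation entirely: $\tau_{T\setminus C}=\tau_v$ from any $x\in C$ and $\pi(T\setminus C)\ge 1/2$, so $H\le\tmax\lesssim\tmix$ directly from Theorem~\ref{thm:mixhit}.) So the lower bound is salvageable with a one-line repair.

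The upper bound, however, has a genuine gap. The reduction to $\estart{\tau_A}{v}\lesssim H$ is correct, but everything after that is a plan, not a proof, and you say so yourself: the key inequality $\Reff(v,A)\lesssim H/m$ is asserted with a sketch whose ``most delicate case'' (the set $A$ concentrated deep inside one subtree) is explicitly deferred to an unexecuted induction. This is precisely where all the work of the theorem lives, so the proof is incomplete. There is also a bookkeeping inconsistency in the route you chose: combining Wald's bound $\estart{\tau_A}{v}\le(1+H)/p$ with $p=1/(c_v\Reff(v,A))$ and the target $\Reff(v,A)\lesssim H/m$ yields $\estart{\tau_A}{v}\lesssim(1+H)\pi(v)H$, which is of order $H^2$, not $H$; to make $\Reff(v,A)\lesssim H/m$ suffice you should instead collapse $A$ to a single vertex and use the commute-time bound $\estart{\tau_A}{v}\le 2m\,\Reff(v,A)$ (equivalently, the excursion argument requires the stronger claim $p\gtrsim 1$, i.e.\ $\Reff(v,A)\lesssim 1/c_v$, which you have not addressed). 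Until the resistance estimate is actually proved, the upper bound --- and hence the theorem --- is not established.
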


Before proving Theorem~\ref{thm:upperboundmix} we introduce another notion
\[
t_I^* = \max_{x} \estart{\tau_I}{x,\pi}.
\]
Note the difference between $t_I^*$ and $\tin$ is that instead of maximizing over all starting points, in $\tin^*$ we start one chain from stationarity and maximize over the starting point of the other one.

\begin{proposition}\label{pro:titistar}
For all Markov chains we have
\[
\tin \asymp \tin^*.
\]
\end{proposition}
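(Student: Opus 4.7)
My plan is to handle the two inequalities of $\tin \asymp \tin^*$ separately. The direction $\tin^* \leq \tin$ is immediate: since
\[
\estart{\tau_I}{x,\pi} = \sum_y \pi(y)\, \estart{\tau_I}{x,y} \leq \max_y \estart{\tau_I}{x,y},
\]
taking the maximum over $x$ gives $\tin^* \leq \tin$. The real content is the reverse direction $\tin \lesssim \tin^*$, which I would approach by reducing the adversarial starting state of $Y$ to a stationary one via an auxiliary walk.

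To execute that reduction, I would fix starting states $x,y$ and introduce a third chain $\widehat Y$ started from $\pi$, independent of $X$ and $Y$. Let $\sigma = \tau_I(Y, \widehat Y)$ be the intersection time of the trajectories of $Y$ and $\widehat Y$; by the very definition of $\tin^*$ we have $\estart{\sigma}{y} \leq \tin^*$. At time $\sigma$ there exist $s_1, s_2 \leq \sigma$ with $Y_{s_1} = \widehat Y_{s_2}$, and by the strong Markov property I would couple $Y$ with $\widehat Y$ past these coincident indices so that $Y_{s_1+u} = \widehat Y_{s_2+u}$ for all $u \geq 0$, while $Y$ keeps its original marginal law. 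Thus the trajectory of $Y$ for times $\geq s_1$ is identified with a time-shifted tail of the stationary walk $\widehat Y$.

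With this coupling in place I would split $\tau_I(X,Y)$ according to whether the first intersection occurs before or after $\sigma$. If it occurs after $\sigma$, then $X$ must intersect the $\widehat Y$-tail of $Y$'s trajectory; using the independence of $X$ from $\widehat Y$ together with the full stationarity of $\widehat Y$, and the bound $\estart{\tau_I(X,\widehat Y)}{x,\pi} \leq \tin^*$ coming directly from the definition of $\tin^*$, I would conclude that the additional expected time beyond $\sigma$ is $O(\tin^*)$. Adding the two contributions gives $\estart{\tau_I(X,Y)}{x,y} \leq \estart{\sigma}{y} + O(\tin^*) \lesssim \tin^*$, uniformly in $(x,y)$.

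The hard part will be that at the random meeting time $\sigma$ the state $\widehat Y_{s_2}$ is \emph{not} $\pi$-distributed: conditioning on the intersection biases its law, so one cannot na\"ively feed it into the $\tin^*$ bound. To finesse this, the argument must exploit the full stationarity of the process $\widehat Y$ (rather than merely a one-dimensional marginal) together with the independence between $X$ and $\widehat Y$, for instance by bounding $\estart{\tau_I(X,Y)}{x,y}$ via the intersection of $X$ with the entire trajectory of the stationary walk $\widehat Y$, which is directly amenable to the $\tin^*$ bound.
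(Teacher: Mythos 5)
Your reduction has the right overall shape---introducing an auxiliary stationary chain as an intermediary is exactly the paper's strategy (there the third chain $Z\sim\pi$ is met first by $X$ rather than by $Y$, but the roles are symmetric)---and you have correctly isolated the crux: the state at which the two trajectories first meet is \emph{not} $\pi$-distributed, so the definition of $\tin^*$ cannot be fed that state directly. The problem is that you stop at naming this obstacle. The sentence ``the argument must exploit the full stationarity of the process $\widehat Y$'' is not a proof step, and the concrete suggestion you offer (bound things via the intersection of $X$ with the \emph{entire} trajectory of $\widehat Y$) does not close the gap: an intersection of $X$ with the portion of $\widehat Y$'s path \emph{before} the meeting index $s_2$ is useless, because under your coupling only the post-$s_2$ tail of $\widehat Y$ is part of $Y$'s trajectory. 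What is missing is a quantitative statement that, with probability bounded below, \emph{every} state visited by the stationary chain up to time $t$ is ``good'', i.e.\ is a state from which the required future intersection occurs with probability at least $1/4$. The paper obtains this from Doob's maximal inequality applied to the martingale $M_k=\prcond{\text{no intersection}}{Z_0,\ldots,Z_k}{y,\pi}$: since $\estart{M_t}{y,\pi}$ is the unconditional non-intersection probability, which is small by Markov's inequality and the definition of $\tin^*$, the maximal inequality bounds the probability that $\max_k M_k\ge 3/4$, and on the complementary event the meeting state is automatically good \emph{no matter how biased its conditional law is}. Without this (or an equivalent device) the argument does not go through; this martingale step is the actual content of the proposition.

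A secondary but genuine problem is the coupling itself. You define $\sigma$ as the first time the \emph{ranges} of $Y$ and $\widehat Y$ intersect; at that time both trajectories are already revealed up to time $\sigma$, so you cannot retroactively impose $Y_{s_1+u}=\widehat Y_{s_2+u}$ for $u\le \sigma-s_1$. Nor can you identify the continuation of $Y$ with the continuation of $\widehat Y$ after index $s_2$, since the event that the first meeting occurs at $\widehat Y_{s_2}$ already depends on $\widehat Y_{s_2+u}$ for $s_2+u\le\sigma$, so that continuation is not an independently evolving fresh chain. The paper avoids couplings altogether: it conditions on the meeting state $w$, restarts the relevant chain from $w$ as a genuinely fresh chain via the Markov property, and compares the range of that fresh chain over a window of length $4t$ with a stationary-chain segment over $[t,3t]$---which is precisely why the time horizons $4t$ and $5t$ appear in the paper's estimates.
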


\begin{proof}[\bf Proof]
Obviously we have $\tin^*\leq \tin$, so we only need to prove that $\tin\lesssim \tin^*$.
To do so, we consider three independent chains, $X$, $Y$ and~$Z$ such that $X_0=x$, $Y_0=y$ and $Z_0\sim \pi$. We will denote by~$\tau_{I}^{X,Y}$ the first time that $X$ and $Y$ intersect and similarly for $\tau_I^{X,Z}$.

Let $t=6t_I^*$. It suffices to show that for all $x,y$ we have
\begin{align}\label{eq:maingoal}
\prstart{\tau_I^{X,Y}\leq 4t_I^*}{x,y} \geq c>0,
\end{align}
since then by performing independent experiments, we would get that $\tin \lesssim \tin^*$.
For all $0\leq k\leq t$ we define $$
M_k = \prcond{Y[0,4t] \cap Z[2t,3t]= \emptyset}{Z_0,\ldots, Z_k}{y,\pi} = \prcond{Y[0,4t] \cap Z[2t,3t]= \emptyset}{Z_k}{y,\pi} ,
$$
where the last equality follows from the Markov property. Then clearly~$M$ is a martingale.
By Doob's maximal inequality we get
\begin{align*}
\prstart{\max_{0\leq k\leq t} M_k\geq \frac{3}{4}}{y,\pi} &\leq \frac{4}{3}\cdot \prstart{Y[0,4t] \cap Z[2t,3t]= \emptyset}{y,\pi} \\
&\leq \frac{4}{3} \cdot \prstart{Y[2t,3t]\cap Z[2t,3t] =\emptyset}{y,\pi} \\&
\leq \frac{4}{3}\cdot \max_{x} \prstart{\tau_I\geq t}{x,\pi}
\leq \frac{4}{3}\cdot \frac{\max_{x} \estart{\tau_I}{x,\pi}}{t} = \frac{4t_I^*}{3t} = \frac{2}{9},
\end{align*}
where in the final inequality we used Markov's inequality. Next we define
\[
G=\left\{ \max_{0\leq k\leq t} M_k\leq  \frac{3}{4} \quad \text{and}\quad \tau_I^{X,Z} \leq  t \right\}.
\]
By the union bound and Markov's inequality we obtain
\begin{align}\label{eq:unionboundg}
\prstart{G^c}{x,y,\pi} \leq \frac{2}{9} + \frac{1}{6} = \frac{7}{18}.
\end{align}
If $\sigma=\inf\{k: X_k\in Z[0,t]\}\wedge t$ and $B=\{w: \prstart{Y[0,4t]\cap Z[t,3t] \neq \emptyset}{y,w} \geq 1/4\}$, then we have
\begin{align*}
\prstart{\tau_I^{X,Y} \leq 5t}{x,y} \geq \prstart{\tau_I^{X,Y} \leq 5t, G}{x,y,\pi} = \sum_{w\in B} \prstart{\tau_I^{X,Y} \leq 5t, G, X_{\sigma} =w}{x,y,\pi}.
\end{align*}
For the last equality we note that on $G$ if $X_\sigma=w\notin B$, then $\exists \,\ell\leq t$ such that $Z_\ell =w\notin B$, and hence on this event we have
\begin{align*}
\prcond{Y[0,4t]\cap Z[2t,3t]\neq \emptyset}{Z_\ell}{y,\pi} &=\prcond{Y[0,4t]\cap Z[2t,3t]\neq \emptyset}{Z_\ell=w}{y,\pi}  \\
&= \prstart{Y[0,4t]\cap Z[2t-\ell,3t-\ell]\neq \emptyset}{y,w} \\
&\leq \prstart{Y[0,4t]\cap Z[t,3t]\neq \emptyset}{y,w} <\frac{1}{4} \Longrightarrow G^c.
\end{align*}
We now deduce
\begin{align*}
\prstart{\tau_I^{X,Y} \leq 5t}{x,y} &\geq \sum_{w\in B} \prcond{\tau_{I}^{X,Y}\leq 5t}{G,X_\sigma =w}{x,y,\pi} \prcond{X_\sigma =w}{G}{x,\pi} \prstart{G}{x,\pi}\\
& \geq \sum_{w\in B} \prstart{\tau_I^{X,Y}\leq 4t}{w,y} \prcond{X_\sigma =w}{G}{x,\pi} \prstart{G}{x,\pi} \\
&=\sum_{w\in B} \prstart{Y[0,4t]\cap X[0,4t]\neq \emptyset}{w,y} \prcond{X_\sigma =w}{G}{x,\pi} \prstart{G}{x,\pi} \\
&\geq  \sum_{w\in B} \prstart{Y[0,4t]\cap Z[t,3t]\neq \emptyset}{y,w} \prcond{X_\sigma =w}{G}{x,\pi} \prstart{G}{x,\pi}\geq \frac{1}{4} \cdot \frac{11}{18},
\end{align*}
The first inequality follows from the Markov property, since  the events $G$ and $\{X_\sigma=w\}$ only depend on the paths of the chains~$X$ and~$Z$ up to time $t$. The last inequality follows from~\eqref{eq:unionboundg} and the definition of the set~$B$ and this concludes the proof of~\eqref{eq:maingoal}.
\end{proof}

\begin{proposition}\label{pro:tmixtistar}
For all lazy reversible Markov chains we have
\[
\tmax \lesssim t_I^*.
\]
\end{proposition}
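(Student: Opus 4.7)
\medskip

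\textbf{Proof plan.} Fix an arbitrary set $A$ with $\pi(A)\ge 1/8$ and an arbitrary starting state $x$; the goal is to show $\estart{\tau_A}{x}\lesssim \tin^*$. The plan is to reduce this to a single-window success statement: for $t = C\tin^*$ with $C$ a large absolute constant, I aim to establish that
\[
\prstart{\tau_A\le t}{x}\ge c
\]
for some absolute $c>0$. Given such a bound, $\estart{\tau_A}{x}\lesssim\tin^*$ follows by a standard geometric-trials argument, restarting the chain at multiples of $t$ via the Markov property and using that $\tin^*$ bounds the intersection time starting from every state uniformly.

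To obtain this positive-probability bound, I would introduce an independent auxiliary chain $Y$ started from $\pi$, so that $Y_k\sim\pi$ for every $k\ge 0$. By Markov's inequality applied to $\estart{\tau_I}{x,\pi}\le \tin^*$, the intersection time $\tau_I^{X,Y}$ is at most $t/4$ with probability close to $1$ provided $C$ is large. The stationarity of $Y$ ensures that $Y[0,t]$ is rich in states of $\pi$-measure --- in particular it meets $A$ on a density-$\pi(A)$ fraction of its time-indices --- and the core step is to combine the quick intersection of $X$ with $Y$'s trajectory and the frequent visits of $Y$ to $A$ to force $X$ itself to enter $A$ by time $t$.

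To implement this, I would follow the same Doob-type martingale scheme used in the proof of Proposition~\ref{pro:titistar}. Concretely, consider the martingale
\[
M_k=\prcond{X[0,t]\cap Y[0,t]\cap A=\emptyset}{Y_0,\ldots,Y_k}{x,\pi},\qquad 0\le k\le t,
\]
which is indeed a martingale in the $Y$-filtration because $X$ is independent of $Y$. Doob's maximal inequality together with an estimate of $\estart{M_t}{x,\pi}$ will bound the probability that the martingale stays large, and reversibility enters crucially both in estimating $\estart{M_t}{}$ and in relating the event ``$X$ and $Y$ intersect inside $A$'' to ``$X$ hits $A$''. The main obstacle is exactly the estimation of $\estart{M_t}{x,\pi}$: since $X[0,t]$ need not meet $A$ for obvious reasons, a direct bound is unavailable, and it is likely necessary to introduce a third independent chain $Z$ from $\pi$ (playing a role analogous to $Z$ in the proof of Proposition~\ref{pro:titistar}) and to use reversibility to transfer the stationary $\pi$-mass of $A$ along $Z$'s trajectory into a positive probability of intersection-in-$A$ between $X$ and $Y$. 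Once this conversion is in hand, Doob's inequality delivers the desired lower bound on $\prstart{\tau_A\le t}{x}$, and the iteration described above completes the proof.
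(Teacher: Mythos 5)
Your scaffolding matches the paper's: reduce the claim to showing $\prstart{\tau_A\le t}{x}\ge c$ for $t\asymp \tin^*$ and conclude by geometric trials; introduce an independent chain $Y$ started from $\pi$; get a quick intersection of $X$ with $Y[0,t]$ from Markov's inequality applied to $\estart{\tau_I}{x,\pi}\le \tin^*$; and use a Doob maximal inequality to guarantee that the intersection point is ``good''. But the proof is not complete, and the step you yourself flag as the main obstacle is exactly where your choice of martingale fails. Tracking $M_k=\prcond{X[0,t]\cap Y[0,t]\cap A=\emptyset}{Y_0,\ldots,Y_k}{x,\pi}$ is circular: its terminal mean $\estart{M_t}{x,\pi}$ is the probability that $X$ and $Y$ never meet inside $A$, which is essentially the quantity one is trying to control, and there is no a priori reason it is bounded away from $1$. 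You correctly observe that a direct bound is unavailable, but the speculated fix (importing a third chain $Z$ as in Proposition~\ref{pro:titistar}) is not what is needed and is not how the argument closes.

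The paper's resolution is to track a much simpler quantity depending only on $Y$, namely $M_k=\prcond{Y_t\in A^c}{Y_k}{\pi}$. Its mean equals $\pi(A^c)$ by stationarity of $Y$ --- this is the only place where the largeness of $A$ enters --- so Doob's maximal inequality shows that with probability bounded below, $M_k$ stays at most $3/4$ for all $k\le t$. On that event, every state $z$ visited by $Y$ up to time $t$ satisfies $\prcond{Y_t\in A}{Y_k=z}{\pi}\ge 1/4$, hence $\prstart{\tau_A\le t}{z}\ge 1/4$; in particular the state $X_\sigma$ at which $X$ first meets $Y[0,t]$ lies in $B=\{z:\prstart{\tau_A\le t}{z}\ge 1/4\}$. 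Restarting $X$ from $X_\sigma$ via the Markov property then yields $\prstart{\tau_A\le 2t}{x}\ge \frac14\,\prstart{G}{x,\pi}$, where $G$ is the intersection of the two good events. No third chain is needed, and reversibility plays no role in this step (it is used only afterwards, via Theorem~\ref{thm:mixhit}, to convert the bound on $\tmax$ into one on $\tmix$). The missing idea in your write-up is precisely this choice of martingale: condition on $Y$'s trajectory and ask whether $Y$'s \emph{endpoint} at time $t$ lands in $A$, rather than whether the two trajectories meet inside $A$.
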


\begin{proof}[\bf Proof]

The proof of this proposition is similar and simpler than the proof of Proposition~\ref{pro:titistar}. We include it here for the sake of completeness.

Let $X$ and $Y$ be two independent lazy Markov chains such that $X_0=x$ and $Y_0\sim \pi$.
Let~$A$ be a set with $\pi(A)\geq 1/8$ and define
\[
\tau_A = \inf\{t\geq 0: X_t\in A\}.
\]
 Then we claim that for all $x$ we have
\begin{align}\label{eq:firstclaim}
\prstart{\tau_A \leq 12 t_I^*}{x} \geq c>0.
\end{align}
First of all by Markov's inequality we immediately get
\begin{align}\label{eq:markov}
\prstart{\tau_I\geq 6t_I^*}{x,\pi} \leq \frac{1}{6}.
\end{align}
Let $t=6t_I^*$ and for $0\leq k\leq t$ we let
\begin{align*}
M_k = \prcond{Y_t \in A^c}{Y_0,\ldots, Y_k}{\pi} = \prcond{Y_t \in A^c}{Y_k}{\pi},
\end{align*}
where the second equality follows by the Markov property. It follows from the definition of $M$ that it is a martingale, and hence applying Doob's maximal inequality, we immediately obtain
\begin{align}\label{eq:doob}
\prstart{\max_{0\leq k\leq t} M_k\geq \frac{3}{4}}{\pi} \leq \frac{4}{3}\cdot\estart{M_t}{\pi}= \frac{4}{3} \cdot \prstart{Y_t\in A^c}{\pi} \leq \frac{1}{2},
\end{align}
since $\pi(A) \geq 1/8$. We now let
\[
G = \left\{\max_{0\leq k\leq t} M_k \leq \frac{3}{4} \quad \text{and} \quad \tau_I \leq t\right\}.
\]
By the union bound and using~\eqref{eq:doob} and~\eqref{eq:markov} we obtain
\begin{align*}
\prstart{G^c}{x,\pi} \leq \frac{1}{2} + \frac{1}{6} = \frac{2}{3}.
\end{align*}
Letting $\sigma = \min\{k: X_k\in Y[0,t]\}\wedge t$ and $B=\{z: \prstart{\tau_A\leq t}{z} \geq 1/4\}$, we now get
\begin{align*}
\prstart{\tau_A\leq 2t}{x} \geq \prstart{\tau_A\leq 2t, G}{x,\pi} = \sum_{z\in B} \prstart{\tau_A\leq 2t, G, X_\sigma = z}{x,\pi}.
\end{align*}
The last equality is justified, since if $X_\sigma=z\notin B$, then $\exists k$ such that $Y_k=z\notin B$, and hence on this event we have
\begin{align*}
 \prcond{Y_t\in A}{Y_k}{\pi} <
\frac{1}{4} \Rightarrow \max_{0\leq k\leq t} M_k > \frac{3}{4} \Rightarrow G^c.
\end{align*}
Therefore we deduce that
\begin{align*}
\prstart{\tau_A\leq 2t}{x} &\geq \sum_{z\in B} \prcond{\tau_A\leq 2t}{G, X_\sigma =z}{x,\pi} \prcond{X_\sigma=z}{G}{x,\pi}\prstart{G}{x,\pi}
\\& \geq \sum_{z\in B} \prstart{\tau_A\leq t}{z} \prcond{X_\sigma=z}{G}{x,\pi} \prstart{G}{x,\pi} \geq \frac{1}{4} \cdot \frac{1}{3} = \frac{1}{12},
\end{align*}
where the second inequality follows by the Markov property, since the events~$G$ and $\{X_\sigma =z\}$ only depend on the paths of the chains up to time $t$.
This concludes the proof of~\eqref{eq:firstclaim} and by performing independent geometric experiments, we finally get that
\begin{align*}
\max_x \estart{\tau_A}{x} \lesssim t_I^*.
\end{align*}
Since $A$ was an arbitrary set with $\pi(A)\geq 1/8$, we get
\begin{align*}
\tmax \lesssim t_I^*
\end{align*}
and this finishes the proof.
\end{proof}

\begin{proof}[\bf Proof of Theorem~\ref{thm:upperboundmix}]

Propositions~\ref{pro:titistar} and~\ref{pro:tmixtistar} immediately give that for all  Markov chains we have
\[
\tmax \lesssim \tin
\]
and this finishes the proof.
\end{proof}

\begin{proof}[\bf Proof of Corollary~\ref{cor:tmix}]

Using the equivalence between mixing times and hitting times of large sets for reversible chains by Theorem~\ref{thm:mixhit} combined with the statement of Theorem~\ref{thm:upperboundmix} shows that
\[
\tmix\lesssim \tin.
\]
It remains to prove that for trees the two quantities, $\tmix$ and $\tin$, are equivalent. Since $\tmix\lesssim \tin$ for all reversible Markov chains, we only need to show that $\tin\lesssim \tmix$. Let~$v$ be a central node. Then if we wait until both chains $X$ and~$Y$ hit $v$, this will give an upper bound on their intersection time, and hence
\[
\estart{\tau_I}{x,y} \leq \estart{\tau_v^X}{x} + \estart{\tau_v^Y}{y} \leq 2t_v.
\]
Now Theorem~\ref{thm:treetmix} finishes the proof.
\end{proof}

\section{Intersection time for transitive chains}\label{sec:transitive}

In this section we prove Theorem~\ref{thm:transitive}. We start by showing that for transitive chains instead of considering one or two worst starting points, both chains can start from stationarity. In particular, we have the following.

\begin{lemma}\label{lem:stationary}
Let $X$ be a transitive and reversible chain on a finite state space. Then
\[
t_I\asymp \estart{\tau_I}{\pi,\pi}.
\]
\end{lemma}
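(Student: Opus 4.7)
The plan is to reduce the claim to Proposition~\ref{pro:titistar} by showing that, under transitivity, the function $x\mapsto \estart{\tau_I}{x,\pi}$ is constant in~$x$, so that the quantity $t_I^*$ already coincides with $\estart{\tau_I}{\pi,\pi}$.

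First I would fix arbitrary states $x,y$ and invoke transitivity to pick a bijection $\phi:E\to E$ with $\phi(x)=y$ and $p(z,w)=p(\phi(z),\phi(w))$ for all $z,w$. Since $\phi$ preserves the transition kernel and $\pi$ is the unique stationary distribution, one automatically has $\phi_*\pi=\pi$. Now let $X$ be a chain started at $x$ and $Y$ an independent chain with $Y_0\sim\pi$. Then $(\phi(X),\phi(Y))$ is distributed as a pair of independent chains started from $(y,\pi)$, and the event $\{X_s=Y_t\}$ agrees with $\{\phi(X_s)=\phi(Y_t)\}$ because $\phi$ is a bijection. Consequently $\tau_I$ has the same law under the two starting configurations, so
\[
\estart{\tau_I}{x,\pi}=\estart{\tau_I}{y,\pi}.
\]
Averaging this identity against $\pi$ in $y$ yields $\estart{\tau_I}{x,\pi}=\estart{\tau_I}{\pi,\pi}$ for every $x$, and taking the maximum over $x$ gives $t_I^*=\estart{\tau_I}{\pi,\pi}$.

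Second, I would apply Proposition~\ref{pro:titistar}, which provides $t_I\asymp t_I^*$ for every Markov chain; combined with the identity above this proves the lemma.

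There is no real obstacle in this argument; the only point that has to be checked carefully is that the symmetry $\phi$ actually preserves $\pi$, which follows from uniqueness of the stationary distribution together with the fact that $\phi$ intertwines with $P$. Note that reversibility is not even needed for the reduction itself — it enters only through the hypotheses of the proposition that the authors plan to apply elsewhere.
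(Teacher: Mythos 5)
Your proposal is correct and follows essentially the same route as the paper: invoke Proposition~\ref{pro:titistar} to get $t_I\asymp t_I^*$, observe that transitivity makes $x\mapsto\estart{\tau_I}{x,\pi}$ constant, and average over $x$. You simply spell out the symmetry argument (that $\phi$ preserves $\pi$ and hence the law of $\tau_I$) which the paper leaves implicit.
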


\begin{proof}[\bf Proof]

From Proposition~\ref{pro:titistar} we have that for all reversible chains
\begin{align*}
t_I\asymp \max_x \estart{\tau_I}{x,\pi}.
\end{align*}
By transitivity it follows that $\estart{\tau_I}{x,\pi}$ is independent of $x$. Therefore, averaging over all $x$ in the state space proves the lemma.
\end{proof}

For a transitive chain we define for all $t>0$
\[
g_t(x,z) = \sum_{j=0}^{t} p_j(x,z) \quad \text{and}\quad Q_t = \sum_z g_t^2(x,z).
\]
Note that by transitivity $Q_t$ does not depend on $x$.

The next lemma gives a control on the first and second moment of the number of intersections of two independent transitive chains. It will be used in the proof of Theorem~\ref{thm:transitive}. In this form it appeared in~\cite{LPSinters}, but the idea goes back to Le-Gall and Rosen~\cite[Lemma~3.1]{LeGallRosen}. We include the proof here for the reader's convenience.

\begin{lemma}\label{lem:samestartingpoint}
Let $X$ and $Y$ be two independent transitive chains and $I_t=\sum_{i=0}^{t}\sum_{j=0}^{t}\1(X_i=Y_j)$ count the number of intersections up to time~$t$. Then for all $x$ we have
\[
\estart{I_t}{x,x} =Q_t \quad \text{and}\quad \estart{I_t^2}{x,x} \leq 4 Q_t^2.
\]
\end{lemma}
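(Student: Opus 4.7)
The plan is to handle the first and second moments separately, starting in each case from the bilinear expansion $I_t = \sum_{i,j=0}^t \1(X_i = Y_j)$.

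For $\estart{I_t}{x,x}$, taking expectations and using the independence of $X$ and $Y$ immediately gives
\[
\estart{I_t}{x,x} = \sum_{i,j=0}^t \sum_z p_i(x,z)\, p_j(x,z) = \sum_z g_t(x,z)^2 = Q_t.
\]
This step uses only independence, not transitivity.

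For the second moment, I would again invoke independence of $X$ and $Y$ to write
\[
\estart{I_t^2}{x,x} = \sum_{z,w} F(z,w)^2, \qquad F(z,w) = \sum_{i_1, i_2=0}^t \prstart{X_{i_1}=z,\, X_{i_2}=w}{x}.
\]
Next I would split $F(z,w)$ according to whether $i_1 \le i_2$ or $i_1 > i_2$, apply the Markov property to rewrite each joint probability as a product of two transition probabilities, and change summation variables so as to recognise Green-function factors. This produces the estimate $F(z,w) \le g_t(x,z)\, g_t(z,w) + g_t(x,w)\, g_t(w,z)$. Squaring via the inequality $(a+b)^2 \le 2(a^2+b^2)$, and then relabelling $z \leftrightarrow w$ in the second piece, yields
\[
\estart{I_t^2}{x,x} \le 4 \sum_{z,w} g_t(x,z)^2\, g_t(z,w)^2.
\]
At this point transitivity enters: for every state $z$ there is an automorphism sending $x$ to $z$, which forces $\sum_w g_t(z,w)^2 = \sum_w g_t(x,w)^2 = Q_t$. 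Substituting and using $\sum_z g_t(x,z)^2 = Q_t$ a second time yields the desired bound $4 Q_t^2$.

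The main subtlety is the analysis of $F(z,w)$: one must correctly decompose the two-time joint probabilities using the Markov property and absorb the extra term coming from the $i_1 > i_2$ region into the dominant region via the $z \leftrightarrow w$ symmetrization. The final transitivity step is then essentially bookkeeping, and notably the argument goes through without invoking reversibility.
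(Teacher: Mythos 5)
Your proposal is correct and follows essentially the same route as the paper: expand $I_t^2$, use independence to factor into $\sum_{z,w}F(z,w)^2$, bound $F(z,w)\leq g_t(x,z)g_t(z,w)+g_t(x,w)g_t(w,z)$ via the Markov property on the two time-orderings, apply $(a+b)^2\leq 2(a^2+b^2)$, and finish with transitivity giving $\sum_w g_t^2(z,w)=Q_t$. Your added observation that reversibility is not needed is also consistent with the lemma's hypotheses.
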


\begin{proof}[\bf Proof]

For the first moment of the number of intersections we have
\begin{align*}
\estart{I_t}{x,x} = \sum_{i=0}^{t} \sum_{j=0}^{t} \prstart{X_i=Y_j}{x,x} = \sum_z \sum_{i=0}^{t}\sum_{j=0}^{t} \prstart{X_i=z}{x} \prstart{Y_j=z}{x} = \sum_z g_t^2(x,z)=Q_t.
\end{align*}
For the second moment of $I_t$ we have
\begin{align*}
\estart{I_t^2}{x,x}  &= \sum_{i,j,\ell,m=0}^{t}  \prstart{X_i=Y_j, X_\ell = Y_m}{x,y} = \sum_{z,w} \sum_{i,j,\ell,m=0}^{t} \prstart{X_i=z, X_\ell=w}{x} \prstart{Y_j=z,Y_m=w}{y} \\
&\leq  \sum_{z,w} (g_t(x,z) g_t(z,w) +g_t(x,w)g_t(w,z))^2 \\&\leq 2\sum_{z,w} (g_t^2(x,z)g_t^2(z,w) + g_t^2(x,w)g_t^2(w,z))=4Q_t^2.
\end{align*}
For the second inequality we used $(a+b)^2\leq 2(a^2 + b^2)$ and for the last one we used transitivity.
\end{proof}

\begin{lemma}\label{lem:prelim}
Let $X$ be a transitive chain on $n$ states starting from $x$ and $S_t(x) = \sum_{j=0}^{t} g_t(x,X_j)$.
Then
\[
\prstart{S_t(x)\geq \frac{Q_t}{2}}{x} \geq \frac{1}{16}.
\]
\end{lemma}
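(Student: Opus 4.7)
The plan is a standard second-moment (Paley--Zygmund) argument applied to $S_t(x)$. First I would compute the mean of $S_t(x)$ directly by exchanging sums and using the definition of $g_t$:
\[
\estart{S_t(x)}{x} = \sum_{j=0}^{t}\sum_z p_j(x,z)\,g_t(x,z) = \sum_z g_t(x,z)^2 = Q_t.
\]

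The next step is to bound $\estart{S_t(x)^2}{x}$ without any fresh calculation, by recycling Lemma~\ref{lem:samestartingpoint}. The key observation is that $S_t(x)$ is exactly the conditional expectation of the intersection count $I_t$ given the trajectory of $X$. Indeed, letting $Y$ be an independent copy of the chain started from $x$,
\[
\estart{I_t \mid X_0,\ldots,X_t}{x,x} = \sum_{i=0}^{t}\sum_{j=0}^{t} p_j(x,X_i) = \sum_{i=0}^{t} g_t(x,X_i) = S_t(x).
\]
Conditional Jensen's inequality then gives $S_t(x)^2 \leq \estart{I_t^2\mid X_0,\ldots,X_t}{x,x}$, and taking expectations and applying Lemma~\ref{lem:samestartingpoint} yields $\estart{S_t(x)^2}{x}\leq \estart{I_t^2}{x,x}\leq 4Q_t^2$.

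Finally, the Paley--Zygmund inequality applied to $Z=S_t(x)$ (equivalently, writing $\estart{Z}{x}\leq Q_t/2 + \estart{Z\1(Z\geq Q_t/2)}{x}$ and applying Cauchy--Schwarz) gives
\[
\prstart{S_t(x)\geq Q_t/2}{x} \geq \frac{(\estart{S_t(x)}{x})^2}{4\,\estart{S_t(x)^2}{x}} \geq \frac{Q_t^2}{16\,Q_t^2} = \frac{1}{16},
\]
as required. The only real obstacle is spotting the identity $S_t(x)=\estart{I_t\mid X_0,\ldots,X_t}{x,x}$; once this is seen, the second-moment bound is imported for free from Lemma~\ref{lem:samestartingpoint} and everything else is Paley--Zygmund boilerplate.
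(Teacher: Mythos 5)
Your proof is correct and follows essentially the same route as the paper: both identify $S_t(x)$ as the conditional expectation of the intersection count $I_t$ given the trajectory of $X$, import the second-moment bound $\estart{I_t^2}{x,x}\leq 4Q_t^2$ from Lemma~\ref{lem:samestartingpoint} via Jensen, and finish with the Paley--Zygmund (second moment) inequality.
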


\begin{proof}[\bf Proof]
Let $X$ and $Y$ be two independent copies of the chain starting from $x$. We write
\[
I_t=\sum_{j=0}^{t}\sum_{\ell=0}^{t} \1(X_j=Y_\ell)
\]
for the total number of intersections up to time $t$. We now observe that
\[
S_t(x) = \escond{I_t}{X_0,\ldots, X_t}{x},
\]
and hence we get
\[
\estart{S_t(x)}{x} = \estart{I_t}{x,x} =Q_t \quad \text{and}\quad
\estart{S_t^2(x)}{x} \leq \estart{I_t^2}{x,x}.
\]
From Lemma~\ref{lem:samestartingpoint} we now obtain
\[
\estart{S_t^2(x)}{x} \leq \estart{I_t^2}{x,x}\leq 4 (\estart{I_t}{x,x})^2 = 4Q_t^2.
\]
Applying the second moment method finally gives
\[
\prstart{S_t(x) \geq \frac{Q_t}{2}}{x} \geq \frac{1}{4} \cdot \frac{(\estart{S_t(x)}{x})^2}{ \estart{S_t^2(x)}{x}} \geq \frac{1}{16}
\]
and this concludes the proof.
\end{proof}

The following proposition is the key ingredient of the proof of Theorem~\ref{thm:transitive}.
We now explain the key idea behind the proof which was used in~\cite[Theorem~5.1]{HagPerSteif}. We define a set of good points on the path of the chain~$X$ and show that conditional on~$X$ and~$Y$ intersecting before time $t$, then they intersect at a good point with constant probability .

\begin{proposition}\label{pro:interstrans}
Let $X$ and $Y$ be two independent copies of a transitive chain on $n$ states started from stationarity. Let $I_t$ denote the number of intersections of $X$ and $Y$ up to time $t$. Then
\[
\frac{(t+1)^2}{4nQ_t} \leq \prstart{I_t>0}{\pi,\pi}  \leq \frac{2^7(t+1)^2}{nQ_t}.
\]
\end{proposition}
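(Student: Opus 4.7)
We handle the two inequalities separately: the lower bound by the second moment method, and the upper bound by a time-splitting + strong Markov argument. The first step in both cases is to compute $\estart{I_t}{\pi,\pi}$ and bound $\estart{I_t^2}{\pi,\pi}$. Transitivity forces $\pi$ to be uniform, so $\prstart{X_i=Y_j}{\pi,\pi}=1/n$ for every $(i,j)$, giving $\estart{I_t}{\pi,\pi}=(t+1)^2/n$. Using the identity $\sum_w p_a(z,w)p_b(z,w)=p_{a+b}(z,z)$ (which follows from reversibility and uniform $\pi$), we obtain $\prstart{X_i=Y_j,\,X_\ell=Y_m}{\pi,\pi}=\tfrac{1}{n}\,p_{|\ell-i|+|m-j|}(x,x)$, and bounding the weights by $4(t+1)^2$ yields
\[
\estart{I_t^2}{\pi,\pi}\;\le\;\frac{4(t+1)^2 Q_t}{n}.
\]
The lower bound then follows immediately from Paley--Zygmund:
\[
\prstart{I_t>0}{\pi,\pi}\;\ge\;\frac{\estart{I_t}{\pi,\pi}^2}{\estart{I_t^2}{\pi,\pi}}\;\ge\;\frac{(t+1)^2}{4nQ_t},
\]
which gives the claimed constant.

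For the upper bound, let $B'$ be the event that $X_i=Y_j$ for some $i,j\in[0,t/2]$. The four sub-rectangles of $[0,t]^2$ with sides drawn from $\{[0,t/2],[t/2,t]\}$ cover $[0,t]^2$, and since $X_{t/2}$ and $Y_{t/2}$ are distributed according to $\pi$, the shifted chains $(X_{t/2+\cdot},Y_{t/2+\cdot})$ form another independent stationary pair. Hence each sub-rectangle carries an intersection with probability exactly $\prstart{B'}{\pi,\pi}$, and a union bound gives $\prstart{I_t>0}{\pi,\pi}\le 4\prstart{B'}{\pi,\pi}$. On $B'$ we set $\tau=\min\{i\le t/2:X_i\in Y[0,t/2]\}$ and $\sigma=\min\{j\le t/2:Y_j=X_\tau\}$. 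Conditioning first on $Y[0,t/2]$ makes $\tau$ a stopping time for $X$, and then conditioning on $X_\tau=z$ makes $\sigma$ a stopping time for $Y$; two applications of strong Markov show that the continuations $(X_{\tau+a})_{a\ge 0}$ and $(Y_{\sigma+b})_{b\ge 0}$ are independent Markov chains from $z$, each running for at least $t/2$ more steps. Transitivity then gives
\[
\escond{I_t}{\tau,\sigma,z}{\pi,\pi}\;\ge\;\sum_{a,b=0}^{t/2}p_{a+b}(z,z)\;=\;Q_{t/2},
\]
so Markov's inequality yields $\prstart{B'}{\pi,\pi}\le \estart{I_t}{\pi,\pi}/Q_{t/2}=(t+1)^2/(nQ_{t/2})$.

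It remains to compare $Q_{t/2}$ with $Q_t$. For lazy reversible chains all eigenvalues of $P$ lie in $[0,1]$, so the spectral expansion $p_s(x,x)=\pi(x)\sum_j\lambda_j^s\psi_j(x)^2$ shows that $s\mapsto p_s(x,x)$ is non-increasing. Splitting the $(a,b)$ sum in $Q_t$ into four sub-squares with sides $[0,t/2]$ and $[t/2,t]$ and shifting $a\mapsto a-t/2$ and/or $b\mapsto b-t/2$ inside each one, monotonicity gives $Q_t\le 4Q_{t/2}$. Chaining the estimates,
\[
\prstart{I_t>0}{\pi,\pi}\;\le\;4\prstart{B'}{\pi,\pi}\;\le\;\frac{16(t+1)^2}{nQ_t}\;\le\;\frac{2^7(t+1)^2}{nQ_t},
\]
with room to spare in the constant.

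The main obstacle we anticipate is the strong Markov bookkeeping: one must condition sequentially, first on $Y[0,t/2]$ and then on $X_\tau$, so that $\tau$ and $\sigma$ become stopping times for the right chains, and the two continuations can legitimately be viewed as independent Markov chains launched from a common state $z$. Everything else (the splitting into four rectangles, the doubling estimate $Q_t\le 4Q_{t/2}$, and the second moment bound) is essentially bookkeeping once the identity $\sum_w p_a(z,w)p_b(z,w)=p_{a+b}(z,z)$ is in hand.
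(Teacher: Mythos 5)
Your lower bound is exactly the paper's argument: the first and second moment computations for $I_t$ under $\bP_{\pi,\pi}$ followed by Paley--Zygmund give $(t+1)^2/(4nQ_t)$, and that half is correct.

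The upper bound, however, has a genuine gap at the very step you flag as the main obstacle: the ``two applications of strong Markov.'' Conditioning on $Y[0,t/2]$ does make $\tau=\min\{i\le t/2: X_i\in Y[0,t/2]\}$ a stopping time for $X$, so the continuation of $X$ after $\tau$ is a clean chain from $z$. But the event $\{\tau=i,\ X_\tau=z\}$ is \emph{not} measurable with respect to $(Y_s)_{s\le \sigma}$ together with $(X_s)_{s\le i}$: it requires $X_0,\dots,X_{i-1}\notin Y[0,t/2]$, which constrains the entire range of $Y$ up to time $t/2$, including $Y_{\sigma+1},\dots,Y_{t/2}$. After conditioning on $\{\tau=i,\,X_\tau=z,\,\sigma=j\}$, the segment $(Y_{\sigma+b})_{b\le t/2-\sigma}$ is a chain conditioned to avoid the set $\{X_0,\dots,X_{i-1}\}$, not an unconditioned chain started from $z$; there is no reason this conditioning cannot \emph{decrease} the number of intersections with the continuation of $X$ (which also starts at $z$). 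Hence the key inequality $\escond{I_t}{\tau,\sigma,z}{\pi,\pi}\ge\sum_{a,b=0}^{t/2}p_{a+b}(z,z)=Q_{t/2}$ is not justified, and since your entire upper bound rests on this conditional first moment being at least of order $Q_t$, the argument does not close. (The four-rectangle reduction and the doubling estimate $Q_t\le 4Q_{t/2}$ are fine.)

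This is precisely the difficulty the paper's proof is engineered to avoid. Rather than restarting both chains at the first meeting point, it defines a deterministic set of ``good'' indices $\Gamma_t(x)=\{r\le t:\sum_{j=0}^t g_t(x_r,x_{r+j})\ge Q_t/2\}$ on the path of $X$, shows via a second moment argument (Lemma~\ref{lem:prelim}) that each fixed index is good with probability at least $1/16$, and then applies the Markov property only to $Y$: conditional on the entire path $(X_s)_{s\le 2t}$ and on $Y$ first hitting a good point $X_k$, the expected number of subsequent intersections is $\sum_j g_t(X_k,X_{k+j})\ge Q_t/2$ by the very definition of goodness, with no second strong Markov step required. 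To salvage your outline you need a device of this kind (making the relevant lower bound a deterministic function of the one path that is fully revealed); the naive ``restart both walks at the first intersection'' computation is exactly the step that fails.
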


\begin{proof}[\bf Proof]
For all $t$ using the independence between $X$ and $Y$ we get
\begin{align*}
\estart{I_t}{\pi,\pi} = \sum_{z} \sum_{i,j=0}^{t}  \prstart{X_i=z,Y_j=z}{\pi,\pi} =\frac{(t+1)^2}{n}.
\end{align*}
For the second moment we have
\begin{align}\label{eq:firstmoment}
\estart{I_t^2}{\pi,\pi} &=\sum_{i,j,\ell,m=0}^{t} \sum_{z,w} \prstart{X_i=z, X_j=w}{\pi}\prstart{Y_\ell=z, Y_m=w}{\pi}\\
&\leq \frac{(t+1)^2}{n^2} \sum_{z,w} \left(g_t(z,w) + g_t(w,z) \right)^2 = \frac{4(t+1)^2}{n} Q_t,
\end{align}
where for the last equality we used transitivity.
Using the second moment method we obtain
\[
\prstart{I_t>0}{\pi,\pi} \geq \frac{(t+1)^2}{4nQ_t}.
\]
We now turn to prove the upper bound. For every $x=(x_0,\ldots, x_{2t})$ we define the set
\[
\Gamma_t(x) = \left\{r\leq t: \sum_{j=0}^{t}g_t(x_r,x_{r+j}) \geq \frac{Q_t}{2}\right\}.
\]
By Lemma~\ref{lem:prelim}
we have that for all $r\leq t$ and all $z$
\begin{align}\label{eq:keyeq}
\prstart{r\in \Gamma_t(X)}{z} \geq \frac{1}{16},
\end{align}
where to simplify notation we write $\Gamma_t(X)$ for the random set $\Gamma_t((X_s)_{s\leq 2t})$
Next we define
\[
\tau=\min\{j\in [0,t]: \, X_j\in \{Y_0,\ldots, Y_t\} \},
\]
and $\tau=\infty$ if the above set is empty. Conditioned on $(Y_s)_{s\leq t}$, we see that $\tau$ is a stopping time for~$X$.  Thus using also~\eqref{eq:keyeq} we get that~$\tau$ satisfies
\[
\prcond{\tau\in \Gamma_t(X)}{\tau<\infty}{\pi,\pi} \geq \frac{1}{16}.
\]
Therefore
\begin{align}\label{eq:important}
\prstart{I_t>0}{\pi,\pi} = \prstart{\tau<\infty}{\pi,\pi} \leq 16\cdot \prstart{\tau\in \Gamma_t(X)}{\pi,\pi}.
\end{align}
It now remains to bound $\prstart{\tau\in \Gamma_t(X)}{\pi,\pi}$. We define $\sigma=\min\{\ell\in [0,t]: \, Y_\ell \in \cup_{r\in \Gamma_t(X)} X_r\}$
and we note that
\begin{align}\label{eq:tausigma}
\prstart{\tau\in \Gamma_t(X)}{\pi,\pi} \leq \prstart{\sigma\in [0,t]}{\pi,\pi}.
\end{align}
Writing $A_k=\{Y_\sigma=X_k, \, k\in \Gamma_t, \, k\,\text{is minimal},\, \sigma\in [0,t]\}$ for all $k\leq t$ we now have
\begin{align}\label{eq:long1}
&\escond{I_{2t}}{\sigma\in [0,t]}{\pi,\pi} = \sum_{k=0}^{t} \escond{I_{2t}}{A_k}{\pi,\pi}\prcond{A_k}{\sigma\in [0,t]}{\pi,\pi}.
\end{align}
For every $k\leq t$ we obtain
\begin{align*}
\escond{I_{2t}}{A_k}{\pi,\pi} &\geq  \sum_{\substack{x=(x_0,\ldots,x_{2t})\\\text{s.t.} \,k \in \Gamma_t(x)}}\escond{\sum_{i,j=0}^{t}\1(Y_{\sigma+i} = X_{k+j})}{(X_s)_{s\leq 2t}=x,A_k}{\pi,\pi}\prcond{(X_s)_{s\leq 2t}=x}{A_k}{\pi,\pi} \\
 &= \sum_{\substack{x=(x_0,\ldots,x_{2t})\\ \text{s.t.}\,k\in \Gamma_t(x)}} \sum_{j=0}^{t}g_t(x_k,x_{k+j}) \prcond{(X_s)_{s\leq 2t}=x}{A_k}{\pi,\pi} \geq \frac{Q_t}{2}.
\end{align*}
Substituting the above lower bound into~\eqref{eq:long1} we deduce
\begin{align*}
\escond{I_{2t}}{\sigma\in [0,t]}{\pi,\pi} \geq \frac{Q_t}{2}.
\end{align*}
Using~\eqref{eq:firstmoment} and the above bound we finally get
\begin{align*}
\prstart{\sigma\in [0,t]}{\pi,\pi} \leq \frac{\estart{I_{2t}}{\pi,\pi}}{\escond{I_{2t}}{\sigma\in [0,t]}{\pi,\pi}} \leq \frac{(2t+1)^2/n}{Q_t/2} \leq  \frac{2^3 (t+1)^2}{nQ_t}.
\end{align*}
This in conjunction with~\eqref{eq:important} and~\eqref{eq:tausigma} gives
\[
\prstart{I_t>0}{\pi,\pi} \leq \frac{2^7(t+1)^2}{nQ_t},
\]
and this concludes the proof of the upper bound.
\end{proof}

The following lemma follows by the spectral theorem and will be used for the upper bound in the proof of Theorem~\ref{thm:transitive}. Combined with the statement of Theorem~\ref{thm:transitive} it gives that for transitive and reversible chains $\tunif\lesssim \tin$, which is an improvement over Corollary~\ref{cor:tmix} which gives $\tmix\lesssim \tin$. Note that this is not true in general, if the chain is not transitive. Take for instance two cliques of sizes~$\sqrt{n}$ and~$n$ connected by a single edge.

\begin{lemma}\label{lem:tunifbound}
Let $X$ be a reversible, transitive and lazy chain on $n$ states and $(\lambda_j)_j$ are the corresponding non-unit eigenvalues. Then
\[
\tunif \leq 2\sqrt{Q},
\]
where $Q=\sum_{k=2}^{n}(1-\lambda_k)^{-2}$.
\end{lemma}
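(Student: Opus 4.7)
The plan is to run the standard spectral argument for uniform mixing, exploiting transitivity to reduce everything to the trace-like sum $\sum_{k\ge 2}\lambda_k^{t}$, and then convert $\lambda_k^{t}$ into $1/((1-\lambda_k)^{2}t^{2})$ via the elementary inequality $y^{2}e^{-y}\le 4/e^{2}$.

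First I would write the spectral decomposition. Since $X$ is reversible and lazy, its transition matrix has an orthonormal basis $\{f_{k}\}$ of eigenfunctions in $L^{2}(\pi)$ with eigenvalues $1=\lambda_{1}\ge\lambda_{2}\ge\cdots\ge\lambda_{n}\ge 0$, and $p_{t}(x,y)=\pi(y)\sum_{k=1}^{n}\lambda_{k}^{t}f_{k}(x)f_{k}(y)$. A Cauchy--Schwarz on the index $k\ge 2$ (absorbing $\lambda_{k}^{t/2}$ into each factor, which is legitimate because $\lambda_{k}\ge 0$) yields
\[
\Bigl(\tfrac{p_{t}(x,y)}{\pi(y)}-1\Bigr)^{2}\le\Bigl(\tfrac{p_{t}(x,x)}{\pi(x)}-1\Bigr)\Bigl(\tfrac{p_{t}(y,y)}{\pi(y)}-1\Bigr).
\]

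Next I would use transitivity. Because $\pi$ is uniform and $p_{t}(x,x)$ is the same for every state $x$, averaging $p_{t}(x,x)=\pi(x)\sum_{k}\lambda_{k}^{t}f_{k}(x)^{2}$ over $x$ and using $\sum_{x}f_{k}(x)^{2}=n$ gives $p_{t}(x,x)/\pi(x)=\sum_{k=1}^{n}\lambda_{k}^{t}$. Hence
\[
\max_{x,y}\Bigl|\tfrac{p_{t}(x,y)}{\pi(y)}-1\Bigr|\le\sum_{k=2}^{n}\lambda_{k}^{t}.
\]

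The heart of the proof is then the single-eigenvalue estimate. From $\lambda_{k}\le e^{-(1-\lambda_{k})}$ and the calculus fact $\max_{y\ge 0}y^{2}e^{-y}=4/e^{2}$ (attained at $y=2$), one gets $e^{-y}\le 4/(e^{2}y^{2})$ for every $y>0$, and therefore
\[
\lambda_{k}^{t}\le e^{-t(1-\lambda_{k})}\le\frac{4}{e^{2}\,t^{2}\,(1-\lambda_{k})^{2}}.
\]
Summing over $k\ge 2$ gives $\sum_{k\ge 2}\lambda_{k}^{t}\le 4Q/(e^{2}t^{2})$. Plugging $t=2\sqrt{Q}$ produces the bound $1/e^{2}<1/4$, which by the previous display is exactly the condition defining $\tunif$. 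Thus $\tunif\le 2\sqrt{Q}$.

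There is no real obstacle; the only subtlety is choosing the conversion $e^{-y}\le C/y^{2}$ (rather than any other polynomial decay) so that the resulting bound matches $Q=\sum_{k\ge 2}(1-\lambda_{k})^{-2}$ exactly. Any weaker exponent would produce a different functional of the spectrum, and a stronger one would require $t$ too large.
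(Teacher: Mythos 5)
Your proof is correct and follows essentially the same route as the paper: reduce $\tunif$ to the condition $\sum_{k\geq 2}\lambda_k^t\leq 1/4$ via transitivity, then bound each $\lambda_k^t\leq e^{-t(1-\lambda_k)}$ by a constant times $t^{-2}(1-\lambda_k)^{-2}$ so the sum becomes $Q/t^2$ up to a constant. The only differences are cosmetic: you re-derive the reduction to return probabilities by Cauchy--Schwarz where the paper cites it, and you use the sharp constant $4/e^2$ in $y^2e^{-y}\leq 4/e^2$ where the paper uses $e^{y}\geq y^{2}$.
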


\begin{proof}[\bf Proof]

We start by noting that for a transitive, reversible and lazy chain the uniform mixing time is given by
\[
\tunif = \min\left\{t\geq 0: p_t(x,x) \leq \frac{5}{4n}\right\}.
\]
See for instance~\cite[equation~(16)]{evolvingsets} or~\cite[Proposition~A.1]{Shayan}.
By the spectral theorem and using transitivity of $X$ we have
\[
p_t(x,x) =\frac{1}{n}\cdot \sum_{k=1}^{t} \lambda_k^t = \frac{1}{n}+ \frac{1}{n}\cdot \sum_{k=2}^{n}\lambda_k^t.
\]
Therefore $\tunif=\min\{t:\, \sum_{k=2}^{n} \lambda_k^t\leq 1/4\}$.
We now set $\epsilon_j=1-\lambda_j$ for all $j$. Since the chain is lazy, it follows that $\epsilon_j\in [0,1]$ for all $j$.
So we now need to show
\begin{align}\label{eq:tuupper}
\sum_{k=2}^{n}(1-\epsilon_k)^{2\sqrt{\sum_{j=2}^{n}\epsilon_j^{-2}}} \leq \frac{1}{4}.
\end{align}
 In order to prove~\eqref{eq:tuupper} it suffices to show
\begin{align*}
\sum_{k=2}^{n}\exp\left(-2\epsilon_k\cdot  \sqrt{\sum_{j=2}^{n}\epsilon_j^{-2}} \right) \leq \frac{1}{4}.
\end{align*}
Writing $r_k= \epsilon_k\cdot \sqrt{\sum_{j=2}^{n}\epsilon_j^{-2}}$, we get $r_k\geq 1$ and $\sum_{k=2}^{n}r_k^{-2} =1$. Since $e^{r}\geq r^2$ for all $r\geq 0$, we finally deduce
\begin{align*}
\sum_{k=2}^{n} e^{-2r_k} \leq \frac{1}{4}\cdot \sum_{k=2}^{n} r_k^{-2} = \frac{1}{4}
\end{align*}
and this finishes the proof.
\end{proof}

We are now ready to give the proof of Theorem~\ref{thm:transitive}.

\begin{proof}[\bf Proof of Theorem~\ref{thm:transitive}]

Since the chain is reversible and transitive, it follows that for any state~$x$ we have
\[
Q_t = \sum_{i=0}^{t} \sum_{j=0}^{t} p_{i+j}(x,x).
\]
Using the spectral theorem together with transitivity, we obtain
\begin{align}\label{eq:qtexpr}
Q_t = \frac{1}{n} \cdot \sum_{k=1}^{n} \sum_{i,j=0}^{t} \lambda_k^{i+j} = \frac{(t+1)^2}{n} + \frac{1}{n} \cdot \sum_{k=2}^{n} \frac{(1-\lambda_k^{t+1})^2}{(1-\lambda_k)^2}.
\end{align}

For $t\geq t_{{\rm{rel}}} = (1-\lambda_2)^{-1}$ we get
\begin{align*}
\left(1-\lambda_2^{t+1}\right)^2 \geq 1-2\lambda_2^{t+1} \geq 1-2\lambda_2^t \geq 1-\frac{2}{e}.
\end{align*}
Since for all $j\geq 2$ we have $\lambda_j\leq \lambda_2$ using the above inequality we obtain for all $j\geq 2$ and $t\geq t_{{\rm rel}}$
\begin{align*}
\left(1-\lambda_j^{t+1}\right)^2 \geq 1-\frac{2}{e}.
\end{align*}
Therefore for all $t\geq t_{\rm rel}$ we deduce
\begin{align}\label{eq:lowerqt}
Q_t \geq \frac{(t+1)^2}{n} + \left(1-\frac{2}{e} \right)\cdot \frac{Q}{n}.
\end{align}
Using~\eqref{eq:lowerqt} together with Proposition~\ref{pro:interstrans} now gives for $t\geq t_{\rm rel}$
\begin{align}\label{eq:itbound}
\prstart{\tau_I\leq t}{\pi,\pi} \leq \frac{2^7(t+1)^2}{(t+1)^2 + \left(1-\frac{2}{e}\right) Q}.
\end{align}
We now claim that $\tin\gtrsim \sqrt{Q}$. Let $C_1$ be a large constant to be specified later. If $\sqrt{Q} \leq C_1t_{\rm rel}$, then the claim follows from Corollary~\ref{cor:tmix}. So we may assume that $\sqrt{Q} \geq C_1t_{\rm rel}$. Setting $t=C\sqrt{Q}\geq t_{\rm rel}$ for a constant $C\geq 1/C_1$ to be determined we get
\begin{align*}
\prstart{\tau_I\leq t}{\pi,\pi} \leq 2^7\cdot \frac{C^2Q}{C^2Q + \left(1-\frac{2}{e}\right)Q}.
\end{align*}
If we take $C$ so that $C^2= (1-2/e)/2^8 $ and we choose $C_1=(1-2/e)^{-1/2}\cdot 2^4$, then from the above we obtain
\[
\prstart{\tau_I\leq t}{\pi,\pi}\leq \frac{1}{2}
\]
and this proves the claim that $\tin \gtrsim \sqrt{Q}$. 
It remains to show that $\tin \lesssim \sqrt{Q}$. It suffices to show that there are positive constants $c_1$ and $c_2$ such that for all $x,y$ we have
\begin{align}\label{eq:progoal}
\prstart{\tau_I\leq c_1\sqrt{Q}}{x,y} \geq c_2.
\end{align}
Indeed, by then performing independent experiments, we would get that $\tin \lesssim \sqrt{Q}$. 
From~\eqref{eq:qtexpr} we immediately get
\begin{align}\label{eq:upperqt}
Q_t \leq \frac{(t+1)^2}{n} +  \frac{Q}{n}.
\end{align}
This together with Proposition~\ref{pro:interstrans} gives that for all $t$ we have
\begin{align}\label{eq:lowerboundprob}
\prstart{\tau_I\leq t}{\pi,\pi}\geq \frac{1}{4}\cdot \frac{(t+1)^2}{(t+1)^2+ Q}.
\end{align}
Taking~$t=\sqrt{Q}$ in~\eqref{eq:lowerboundprob} gives
\begin{align}\label{eq:1/8}
\prstart{\tau_I\leq \sqrt{Q}}{\pi,\pi} \geq \frac{1}{8}.
\end{align}
From Lemma~\ref{lem:tunifbound} we have $\tunif\leq 2\sqrt{Q}$. Setting $s=2\sqrt{Q}$ we now have for all $x,y$
\begin{align*}
\prstart{\tau_I\leq s+\sqrt{Q}}{x,y} &\geq \prstart{X[s,s+\sqrt{Q}]\cap Y[s,s+\sqrt{Q}] \neq \emptyset}{x,y}\\ &=
\sum_{x',y'} p_{s}(x,x') p_{s}(y,y') \prstart{\tau_I\leq \sqrt{Q}}{x',y'} \\&
\geq \frac{9}{16}\cdot \sum_{x',y'} \pi(x')\pi(y')\prstart{\tau_I\leq \sqrt{Q}}{x',y'}
\\&
\geq \frac{9}{16} \prstart{\tau_I\leq \sqrt{Q}}{\pi,\pi} \geq \frac{9}{128},
\end{align*}
where for the last inequality we used~\eqref{eq:1/8}. This proves~\eqref{eq:progoal}. Finally, from~\eqref{eq:lowerqt}, \eqref{eq:upperqt} and since $\tunif\leq 2\sqrt{Q}$ by Lemma~\ref{lem:tunifbound} we obtain
\[
Q_{\tunif} = \sum_{i,j=0}^{\tunif} p_{i+j}(x,x) \asymp \frac{Q}{n}.
\]
and this concludes the proof of the theorem.
\end{proof}

\section{Intersection time for regular graphs}\label{sec:regular}

In this section we prove Proposition~\ref{pro:regulargraphs} which gives bounds on the intersection time for random walks on regular graphs. We start by proving the first part of Proposition~\ref{pro:regulargraphs}.

\begin{proof}[{\bf Proof of Proposition~\ref{pro:regulargraphs}} {\rm(part (a))}]

Let $t = \sqrt{\thit}/2$ and $y$ be such that $\thit = \max_x \estart{\tau_y}{x}$, where we recall that $\tau_y$ stands for the first hitting time of~$y$ by a simple random walk on~$G$. Then there exists~$z$ such that
\begin{align}\label{eq:yz}
\prstart{\tau_y \leq t}{z} \leq \frac{2t}{\thit},
\end{align}
since otherwise we would get $\max_x\estart{\tau_y}{x} \leq \thit/2$, which contradicts the choice of $y$.
Let~$Y$ and~$Z$ be two independent random walks started from~$y$ and~$z$ respectively. Then by the union bound we get
\begin{align}\label{eq:unionbound}
\prstart{\tau_I <\frac{t}{2}}{y,z} \leq \sum_{k=1}^{t/2} \prstart{\tau^Z_{Y_k}<\frac{t}{2}}{y,z},
\end{align}
where $\tau^Z_{x}$ stands for the first hitting time of $x$ by the random walk~$Z$. We note that by reversibility and regularity we have
\begin{align}\label{eq:bigeq}
\nonumber\prstart{\tau^Z_{Y_k}<\frac{t}{2}}{y,z} &= \sum_w \prstart{\tau^Z_w<\frac{t}{2}, Y_k =w}{y,z} = \sum_w \prstart{\tau^Z_w<\frac{t}{2}}{z} \prstart{Y_k=w}{y} \\
&= \sum_w \prstart{\tau^Z_w<\frac{t}{2}}{z} \prstart{Y_k=y}{w} = \sum_w \prstart{\tau^Z_w<\frac{t}{2}, Y_k = y}{w,z}.
\end{align}
Consider now a third walk~$X$ such that $X_s=Z_s$ for all~$s\leq \tau_{Y_0}^Z$ and $X_s = Y_{s-\tau_{Y_0}^Z}$ for $\tau_{Y_0}^Z\leq s\leq \tau_{Y_0}^Z+k$.
We now obtain
\begin{align*}
\sum_w\prstart{\tau^Z_w<\frac{t}{2}, Y_k = y}{w,z}  &= \sum_w\sum_{s<\frac{t}{2}} \prstart{\tau_w^X=s, X_{s+k}=y}{z}=  \sum_{s<\frac{t}{2}}\sum_w \prstart{\tau_w^X=s, X_{s+k}=y}{z} \\
&\leq \sum_{s<\frac{t}{2}} \prstart{X_{s+k}=y}{z}  = \prstart{\tau^X_y<\frac{t}{2}+k}{z} \leq \prstart{\tau^X_y<t}{z} \leq \frac{2t}{\thit},
\end{align*}
where the penultimate inequality follows since $k\leq t/2$ and the final inequality from the choice of~$z$ and~$y$ in~\eqref{eq:yz}. Combining~\eqref{eq:unionbound} with~\eqref{eq:bigeq} and the above inequality, we therefore conclude that
\begin{align*}
\prstart{\tau_I <\frac{t}{2}}{y,z} \leq \frac{t^2}{\thit} = \frac{1}{4}
\end{align*}
and this gives
\[
\estart{\tau_I}{z,y} \gtrsim \sqrt{\thit}.
\]
This finally implies the desired inequality, i.e.\ $\thit\lesssim t_I^2$.
\end{proof}

Before proving part (b) of Proposition we state a result about return probabilities for random walks on regular graphs.
Its proof follows for instance from~\cite[Proposition~6.16, Chapter~6]{AldFill}. We also state the Cauchy-Schwarz inequality for the transition probabilities for the sake of completeness. For a proof we refer the reader to~\cite[Lemma~3.20, Chapter~3]{AldFill}.

\begin{lemma}\label{lem:returnpro}
Let $G$ be a regular graph on $n$ vertices and $t\leq n^2$. Then for all vertices $x$ the return probability to $x$ satisfies
\[
P^t(x,x) \lesssim \frac{1}{\sqrt{t}}.
\]
\end{lemma}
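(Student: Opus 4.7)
The lemma is the classical diffusive upper bound on the return probability of a lazy simple random walk on any connected regular graph, valid up to time $n^2$. My plan is to follow the Nash-inequality approach (equivalently, the evolving-sets framework of Morris and Peres), which is also the route of the cited reference \cite[Proposition~6.16]{AldFill}.

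First I would reduce to even times: laziness gives $P^{t+1}(x,x) \geq \tfrac{1}{2} P^t(x,x)$, so it suffices to bound $P^{2t}(x,x)$ for $t \leq n^2/2$. By reversibility together with the regularity of the graph the stationary distribution is uniform, which yields
\[
P^{2t}(x,x) \;=\; \sum_y P^t(x,y)^2 \;=\; \|f_t\|_{\ell^2(V)}^2, \qquad f_t(y) := P^t(x,y),
\]
with $\|f_t\|_{\ell^1(V)} = 1$ for every $t$.

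Next I would control the decay of $\|f_t\|_2^2$ via a one-dimensional Nash-type inequality $\|g\|_2^6 \lesssim \mathcal{E}(g,g)\|g\|_1^4$, valid so long as $\|g\|_2^2 \gtrsim 1/n$. The key isoperimetric input is the trivial bound $|\partial S| \geq 1$ for every nonempty proper subset $S$, which on a $d$-regular graph yields the worst-case profile $\psi(u) \gtrsim 1/(nu)$; this profile is essentially saturated by the cycle $\mathbb{Z}_n$ and is precisely what produces the exponent $-1/2$. Applying the Nash inequality to $f_t$ together with the dissipation identity $\|f_{t+1}\|_2^2 - \|f_t\|_2^2 \asymp -\mathcal{E}(f_t,f_t)$ gives the difference inequality $u_{t+1} - u_t \lesssim -u_t^3$ for $u_t := \|f_t\|_2^2$, and a routine integration yields $u_t \lesssim 1/\sqrt{t}$ throughout $t \leq n^2$, as required.

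The main obstacle is establishing the Nash inequality with the correct quantitative constants (and matching continuous- and discrete-time formulations cleanly); once it is in place the rest is a standard ODE-style argument. As a sanity check, for the cycle $\mathbb{Z}_n$ the local central limit theorem already gives $P^t(0,0) \asymp 1/\sqrt{t}$ for $t \leq n^2$, so the cycle is the extremal example saturating the inequality and any structural improvement (e.g.\ for expanders or higher-dimensional tori) gives strictly faster decay.
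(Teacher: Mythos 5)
Your overall strategy (a Nash-inequality / evolving-sets argument) is a legitimate route to this bound, and in fact the paper offers no proof of its own here --- it only cites \cite[Proposition~6.16, Chapter~6]{AldFill} --- so the comparison is between your sketch and the standard argument. The sketch has a genuine gap at precisely the step you label ``the key isoperimetric input''. The trivial bound $|\partial S|\ge 1$ is \emph{not} sufficient. For the lazy walk on a $d$-regular graph the conductance of a set is $\Phi(S)=\frac{|\partial S|}{2d|S|}$, so $|\partial S|\ge 1$ only yields the profile $\Phi(u)\gtrsim \frac{1}{dnu}$: the degree does not cancel. Testing your Nash inequality $\|g\|_2^6\lesssim \mathcal{E}(g,g)\|g\|_1^4$ on $g=\mathbf{1}_S$ with $|S|=1$ shows its constant must be at least of order $d$, and carrying that constant through the ODE-style iteration (or through the Morris--Peres evolving-sets bound) gives only $P^t(x,x)\lesssim \frac1n+\sqrt{d/t}$. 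This is weaker than the lemma by a factor of $\sqrt{d}$, and is vacuous for $t\le d$ --- a regime the paper genuinely needs, since the lemma is applied to arbitrary regular graphs with an absolute implied constant. Your sanity check on the cycle ($d=2$) cannot detect this loss.

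The missing ingredient is an isoperimetric fact specific to regular graphs: if $|S|\le d$, every vertex of $S$ has at least $d-|S|+1$ neighbours outside $S$, so $|\partial S|\ge |S|(d-|S|+1)$; combined with $|\partial S|\ge 1$ for $|S|>d/2$ this gives $|\partial S|\ge \frac{d}{2|S|}$ for every nonempty proper subset of a connected $d$-regular graph. That extra factor of $d$ for small sets is exactly what cancels the $\frac1d$ in the walk's Dirichlet form and makes the Nash constant universal, yielding the clean $1/\sqrt{t}$. With that input your plan goes through, modulo the (standard but not free) co-area argument needed to upgrade the set-level inequality to the functional Nash inequality, and a separate treatment of the regime $\|g\|_2^2\le 2/n$ corresponding to $t>n^2$. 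Alternatively, one can simply cite the Aldous--Fill proposition, as the paper does.
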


\begin{lemma}\label{lem:cauchy}
Let $X$ be a lazy reversible Markov chain with transition matrix $P$ and stationary distribution~$\pi$. Then for all $x,y$ we have
\[
\frac{P^t(x,y)}{\pi(y)} \leq \sqrt{\frac{P^t(x,x)}{\pi(x)}\cdot  \frac{P^t(y,y)}{\pi(y)}}.
\]
In particular, if $X$ is a lazy simple random walk on a regular graph~$G$, then
\[
P^t(x,y) \leq \sqrt{P^t(x,x)\cdot  P^t(y,y)}.
\]
\end{lemma}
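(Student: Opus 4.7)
The plan is to recognise the claimed inequality as an instance of the Cauchy--Schwarz inequality for the positive semidefinite bilinear form $(u,v)\mapsto \langle P^t u,v\rangle_\pi$ on the space of functions $V\to \R$ equipped with the inner product $\langle f,g\rangle_\pi := \sum_z f(z)g(z)\pi(z)$. Reversibility is exactly the statement that $P$ is self-adjoint on this space, and laziness forces every eigenvalue of $P$ to lie in $[0,1]$, so $P^t$ is self-adjoint and positive semidefinite for every $t\geq 0$. Consequently
\[
\langle P^t u,v\rangle_\pi^{\,2}\leq \langle P^t u,u\rangle_\pi\cdot \langle P^t v,v\rangle_\pi
\]
for all $u,v$.

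Next I would apply this with $u=e_x$ and $v=e_y$, where $e_x(z):=\1(z=x)/\pi(x)$ is the normalised delta at $x$, characterised by $\langle e_x,g\rangle_\pi=g(x)$ for every $g$. Then $(P^t e_x)(y)=\sum_z P^t(y,z)\1(z=x)/\pi(x)=P^t(y,x)/\pi(x)$, which by reversibility equals $P^t(x,y)/\pi(y)$, so
\[
\langle P^t e_x,e_y\rangle_\pi=\frac{P^t(x,y)}{\pi(y)}, \qquad \langle P^t e_x,e_x\rangle_\pi=\frac{P^t(x,x)}{\pi(x)}.
\]
Substituting these identities into the Cauchy--Schwarz bound above and taking square roots yields the first inequality. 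The statement for the lazy simple random walk on an $n$-vertex regular graph is then immediate, since $\pi\equiv 1/n$ and the $\pi$ factors cancel.

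The argument is essentially routine; the only points worth flagging are that laziness (not just reversibility) is what ensures $P^t$ is positive semidefinite and hence that Cauchy--Schwarz applies to the form $\langle P^t\,\cdot\,,\,\cdot\,\rangle_\pi$, and that reversibility is used once more to symmetrise the off-diagonal entries $P^t(x,y)/\pi(y)=P^t(y,x)/\pi(x)$. An equivalent coordinate version would pick an $\langle\cdot,\cdot\rangle_\pi$-orthonormal eigenbasis $(\phi_i)$ of $P$ with eigenvalues $\lambda_i\in[0,1]$, write $P^t(x,y)/\pi(y)=\sum_i \lambda_i^t\phi_i(x)\phi_i(y)$, and apply the scalar Cauchy--Schwarz to the sequences $\sqrt{\lambda_i^t}\phi_i(x)$ and $\sqrt{\lambda_i^t}\phi_i(y)$; this is exactly the same computation performed in coordinates, and the two presentations are interchangeable.
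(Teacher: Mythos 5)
Your proof is correct. The paper does not prove this lemma itself but cites \cite[Lemma~3.20, Chapter~3]{AldFill}, and the argument there is the same one you give: reversibility makes $P^t$ self-adjoint in $\langle\cdot,\cdot\rangle_\pi$, laziness makes it positive semidefinite, and Cauchy--Schwarz for the form $\langle P^t\,\cdot\,,\,\cdot\,\rangle_\pi$ evaluated at normalised point masses (equivalently, your spectral computation in coordinates) yields the inequality.
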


\begin{proof}[{\bf Proof of Proposition~\ref{pro:regulargraphs}} \rm{(part (b))}]

For this proof we assume that $X$ and $Y$ are lazy simple random walks on the graph $G$. Clearly, this only changes the intersection time by a multiplicative constant.

Let $t=c\sqrt{n} \left(\tunif\right)^{\frac{3}{4}}$ for a constant $c$ to be chosen later.
We define $I_t$ to be the total number of intersections of $X$ and $Y$ up to time $t$. We are going to use the second moment method, so we first have to calculate the first and second moments of~$I_t$.

For the first moment we have
\begin{align*}
\estart{I_t}{x,y} &= \sum_{v} \sum_{i,j=0}^{t} \prstart{X_i=Y_j = v}{x,y} = \sum_v \sum_{i,j=0}^{t} p_i(x,v) p_{j}(y,v) \\
&= \sum_{i,j=0}^{t}\sum_v p_i(x,v) p_j(v,y) = \sum_{i,j=0}^{t}  p_{i+j}(x,y) =A,
\end{align*}
where the third equality follows from reversibility and the regularity of the graph. For the second moment we have
\begin{align}\label{eq:long}
\nonumber&\estart{I_t^2}{x,y} = \sum_{v,w}\sum_{i,j,k,\ell=0}^{t}  \prstart{X_i = Y_j =v, X_k = Y_\ell=w}{x,y}\\
\nonumber &=\sum_{v,w} \sum_{i,j,k,\ell=0}^{t} \prstart{X_i=v, X_k=w}{x} \prstart{Y_j=v, Y_\ell =w }{y} \\
\nonumber&=  2\sum_{v,w}\sum_{\substack{(i,k)\\i\geq k}}\sum_{\substack{(j,\ell)\\ j\geq \ell}} p_k(x,w)p_{i-k}(w,v) p_{\ell}(y,w)p_{j-\ell}(w,v) \\
&\quad+2\sum_{v,w}\sum_{\substack{(i,k)\\i\geq k}}\sum_{\substack{(j,\ell)\\ j\leq \ell}} p_k(x,w)p_{i-k}(w,v) p_{j}(y,v)p_{\ell-j}(v,w)= 2\Sigma_1 + 2\Sigma_2.
\end{align}

We now treat each of the two sums~$\Sigma_1$ and $\Sigma_2$ appearing in~\eqref{eq:long} separately. For the first sum~$\Sigma_1$ using again reversibility and regularity of the graph we obtain by summing over~$v$ first that it is equal to
\begin{equation}\label{eq:sigma1}
\begin{split}
\Sigma_1&=\sum_w \sum_{\substack{(i,k)\\i\geq k}}\sum_{\substack{(j,\ell)\\ j\geq \ell}} p_k(x,w) p_{(i+j)-(k+\ell)}(w,w)p_\ell(y,w) \\
&\leq \sum_w\sum_{k,\ell} p_k(x,w) p_\ell(y,w)\sum_{\substack{i\leq t\\j\leq t}} \  p_{i+j}(w,w).
\end{split}
\end{equation}
Using Lemma~\ref{lem:returnpro} we obtain that
\begin{align*}
\sum_{i\leq t} \sum_{j\leq t} p_{i+j}(w,w) &= \sum_{i+j\leq \tunif} p_{i+j}(w,w) + \sum_{\substack{i+j>\tunif\\ i, j\leq t}} p_{i+j}(w,w) \\&\lesssim \sum_{i+j\leq \tunif} \frac{1}{\sqrt{i+j}} + \frac{t^2}{n}
\lesssim \left(\tunif\right)^{\frac{3}{2}} + \frac{t^2}{n}\asymp \left(\tunif \right)^{\frac{3}{2}},
\end{align*}
where the last step follows, since we took $t = c\sqrt{n}\left(\tunif\right)^{\frac{3}{4}}$. Substituting this bound to~\eqref{eq:sigma1} and using reversibility again we deduce
\begin{align*}
\Sigma_1\lesssim (\tunif)^{\frac{3}{2}}\sum_w\sum_{\ell,k\leq t} p_\ell(w,y) p_k(x,w) = (\tunif)^{\frac{3}{2}}\sum_{\ell,k\leq t} p_{k+\ell}(x,y) = (\tunif)^{\frac{3}{2}} \cdot A.
\end{align*}
For the second sum~$\Sigma_2$ appearing in~\eqref{eq:long} we get
\begin{align*}
\Sigma_2 &\lesssim \sum_{v,w}\sum_{\substack{(i,k)\\i\geq k}} \sum_{\substack{(j,\ell)\\ \ell-j\geq \tunif}} \frac{1}{n}\cdot p_k(x,w) p_{i-k}(w,v) p_j(y,v) + \sum_{v,w}\sum_{\substack{(i,k)\\i\geq k}}\sum_{\substack{(j,\ell)\\ \ell-j\leq  \tunif}}p_k(x,w) p_{i-k}(w,v) p_j(y,v) p_{\ell-j}(v,w)\\
& \lesssim \frac{1}{n} \sum_v \sum_{\substack{(i,k)\\i\geq k}} \sum_{\substack{(j,\ell)\\ \ell-j\geq  \tunif}}p_i(x,v) p_j(y,v) +
\sum_{v,w}\sum_{k}\sum_{\substack{(j,\ell)\\ \ell-j\leq  \tunif}}\left(\sum_{i<k+\tunif}\frac{1}{\sqrt{i-k}} + \frac{t}{n}\right)
p_k(x,w) p_j(y,v) p_{\ell-j}(v,w)
\\
&\lesssim\frac{1}{n} \sum_{\substack{(i,k)\\ i\geq k}} \sum_{\substack{(j,\ell)\\ \ell-j\geq  \tunif}}p_{i+j}(x,y)+ \sqrt{\tunif}\cdot \sum_w \sum_k\sum_{\substack{(j,\ell)\\ \ell-j\leq  \tunif}} p_\ell(y,w) p_k(x,w) \\&
= \frac{1}{n} \sum_{\substack{(i,k)\\i\geq k}} \sum_{\substack{(j,\ell)\\ \ell-j\geq \tunif}}p_{i+j}(x,y)+
\sqrt{\tunif} \cdot \sum_k \sum_{\substack{(j,\ell)\\ \ell-j\leq  \tunif}} p_{k+\ell}(x,y)
\\
& \leq \frac{t^2}{n} \sum_{i=0}^{t}\sum_{j=0}^{t} p_{i+j}(x,y) + \sqrt{\tunif}\cdot \sum_j \sum_{L\leq \tunif} \sum_k p_{k+L+j}(x,y)\\
& \leq \frac{t^2}{n} \cdot A + \sqrt{\tunif} \cdot (\tunif) \cdot \left(\frac{t^2}{n} + (\tunif)^{\frac{3}{2}} \right)
\asymp \frac{t^2}{n}\cdot A + (\tunif)^{3}\asymp (\tunif)^{\frac{3}{2}}\cdot A + (\tunif)^{3},
\end{align*}
where the third inequality follows since $\tunif\lesssim n^2$ for all regular graphs and the last inequality follows from Lemmas~\ref{lem:returnpro} and~\ref{lem:cauchy}, i.e.\ for $t\lesssim n^2$
\[
p_t(x,y) \leq \sqrt{p_t(x,x)}\sqrt{p_t(y,y)}\lesssim \frac{1}{\sqrt{t}}.
\]
Therefore, applying the second moment method we now get
\begin{align*}
\prstart{I_t>0}{x,y} \geq \frac{\left(\estart{I_t}{x,y}\right)^2}{\estart{I_t^2}{x,y}} \gtrsim \frac{A^2}{\left(\tunif\right)^{\frac{3}{2}} A + (\tunif)^3} = \frac{A}{\left(\tunif \right)^{\frac{3}{2}} + \frac{(\tunif)^3}{A}}.
\end{align*}
Since $\tunif\lesssim n^2$ for any regular graph, we can take $c$ large enough to ensure that $t-\tunif\asymp t$. Thus we get that the quantity $A$ can be lower bounded by
\begin{align*}
A \geq \sum_{\substack{i,j\leq t\\i+j>\tunif}} p_{i+j}(x,y) \gtrsim \frac{t^2}{n} \asymp \left( \tunif\right)^{\frac{3}{2}}.
\end{align*}
Since the function $f(x) = x/(1+1/x)$ is increasing for $x>0$, using the above lower bound on~$A$,
we finally conclude that
\begin{align*}
\prstart{I_t>0}{x,y} \geq c'>0.
\end{align*}
Since the above bound holds uniformly for all $x$ and $y$ we can
perform independent experiments to finally conclude that for regular graphs $t_I \lesssim \sqrt{n} \left(\tunif \right)^{\frac{3}{4}}$.
\end{proof}

\bibliographystyle{plain}
\bibliography{biblio}

\end{document}